\documentclass[10pt,a4paper]{article}
\usepackage{amsmath}
\usepackage{amsfonts}
\usepackage{amsthm}
\usepackage{relsize}

\newtheorem{thm}{Theorem}[section]
\newtheorem{lem}[thm]{Lemma}
\newtheorem{prop}[thm]{Proposition}
\newtheorem{cor}[thm]{Corollary}

\newtheorem*{thma}{Theorem A}
\newtheorem*{thmb}{Theorem B}
\newtheorem*{thmc}{Theorem C}
\newtheorem*{thmd}{Theorem D}
\newtheorem*{thme}{Theorem E}
\newtheorem*{corn}{Corollary}
\newtheorem*{propn}{Proposition}

\newcommand{\db}{\mathrm{db}}
\newcommand{\eb}{\mathrm{eb}}
\newcommand{\Aut}{\mathrm{Aut}}
\newcommand{\Alt}{\mathrm{Alt}}
\newcommand{\FD}{\mathrm{[FD]}}
\newcommand{\FR}{\mathrm{[FR]}}

\newcommand{\mcS}{\mathcal{S}}
\newcommand{\mcX}{\mathcal{X}}

\newcommand{\bF}{\mathbb{F}}

\newcommand{\bP}{\mathbb{P}}

\setlength{\parskip}{0pt}

\begin{document}

\title{The generalised Fitting subgroup of a profinite group}

\author{Colin Reid\\
School of Mathematical Sciences\\
Queen Mary, University of London\\
Mile End Road, London E1 4NS\\
c.reid@qmul.ac.uk}

\maketitle

\begin{abstract}The generalised Fitting subgroup of a finite group is the group generated by all subnormal subgroups that are either nilpotent or quasisimple.  The importance of this subgroup in finite group theory stems from the fact that it always contains its own centraliser, so that any finite group is an abelian extension of a group of automorphisms of its generalised Fitting subgroup.  We define a class of profinite groups which generalises this phenomenon, and explore some consequences for the structure of profinite groups.\end{abstract}

\emph{Keywords}: Group theory; Profinite groups; Sylow theory

\paragraph{NB} Throughout this paper, we will be concerned with profinite groups as topological groups.  As such, it will be tacitly assumed that groups have a natural topology (depending on their construction), that subgroups are required to be closed, that homomorphisms are required to be continuous, and that generation refers to topological generation.  When we wish to suppress topological considerations, the word `abstract' will be used, for instance `abstract subgroup'.

\section{Introduction}

Thanks to Sylow's theorems, the theory of finite groups can be broken to a large extent into two parts: firstly the study of groups of prime power order, and secondly the study of interactions within a finite group between subgroups of prime power order and elements of order coprime to them.  This description holds especially for finite soluble groups, in light of Hall's theorem, and also the fact that any finite soluble group is determined up to an abelian normal subgroup by its action on the Fitting subgroup, which is itself a direct product of groups of prime power order.  For the general finite case, the analysis often reduces to studying finite soluble groups, together with a study of finite groups which come close to being simple, namely, those groups $G$ containing a normal subgroup $H$ such that $C_G(H) = Z(H)$ and $H/Z(H)$ is simple.\\

In several respects, profinite groups, regarded as topological groups, have a structure which is exactly analogous to the structure of finite groups.  The equivalent of the $p$-subgroups of a finite group turn out to be the (closed) pro-$p$ subgroups of a profinite group, and a version of Sylow's theorems holds for these.  This suggests an approach to the study of profinite groups which is analogous to the approach found in finite group theory: first, study pro-$p$ groups, and second, study coprime interactions invoving pro-$p$ subgroups whose internal structure is assumed to be understood.  In this paper, we are concerned with the second part of this approach.

\paragraph{Notation} We will denote by $\bP$ the set of prime numbers, $\pi$ some subset thereof, and $\pi'$ its complement in $\bP$.  When a group is referred to as pro-$\mcX$ for some property $\mcX$, this is understood to mean that the group is the inverse limit of finite $\mcX$-groups.  Given a topological group $G$, we indicate the closure of the derived group by $G'$, and $G^{(n)}$ is defined inductively by $G^{(0)}=G$ and $G^{(n)} = (G^{(n-1)})'$; on the other hand, $G^n$ indicates the subgroup topologically generated by $n$-th powers of elements of $G$.\\

Given the role of the generalised Fitting subgroup in finite group theory, and the role of pro-$p$ groups in profinite group theory, we are motivated to make the following definitions:

\paragraph{Definitions}  Let $G$ be a profinite group.

The \emph{$\pi$-core} $O_\pi(G)$ of $G$ is the group generated by all normal pro-$\pi$ subgroups of $G$.  The \emph{Fitting subgroup} $F(G)$ of $G$ is the subgroup generated by the subgroups $O_p(G)$, as $p$ ranges over the primes.

We say a profinite group $Q$ is \emph{quasisimple} if $Q$ is perfect and $Q/Z(Q)$ is simple.  We say $Q$ is a \emph{component} of $G$ if it is a subnormal quasisimple subgroup of $G$.  The \emph{layer} $E(G)$ of $G$ is the subgroup generated by all components of $G$.  The \emph{generalised Fitting subgroup} $F^*(G)$ of $G$ is the subgroup generated by $F(G)$ and $E(G)$.\\

Internally, the generalised Fitting subgroup of a profinite group has the structure we expect: it is a central product of pro-$p$ groups and quasisimple groups.  It also contains every subnormal pronilpotent subgroup of $G$.  However, a given non-trivial profinite group $G$ may not have any non-trivial pronilpotent or quasisimple subnormal subgroups.

\paragraph{Definitions}  Let $G$ be a profinite group.  Say $G$ is \emph{Fitting-degenerate} if $F^*(G)$ is trivial.  Say $G$ is \emph{Fitting-regular} if no non-trivial image of $G$ is Fitting-degenerate.

We write $\FD$ for the class of Fitting-degenerate profinite groups and $\FR$ for the class of Fitting-regular profinite groups.  Note that $\FD \cap \FR$ is the class of trivial groups, and that $\FR$ contains the class of finite groups, since a minimal normal subgroup of a finite group $G$ will always be either abelian or contained in $E(G)$ (or both).\\

The author is not aware of any definitions similar to these in the existing literature.  The goal of this paper is therefore to serve as an introduction to these properties and their role in the structure of profinite groups.\\

All profinite groups are constructed from Fitting-regular and Fitting-degen\-erate groups in the following manner, giving a decomposition which also behaves well with respect to open subgroups.

\begin{thma}Let $G$ be a profinite group.  Then $G$ has a characteristic subgroup $R=O_\FR(G)$, called either the \emph{Fitting-regular radical} or the \emph{Fitting-degenerate residual}, such that:

(i) A subnormal subgroup of $G$ is Fitting-regular if and only if it is contained in $R$;

(ii) $G/R$ is Fitting-degenerate, and covers every Fitting-degenerate quotient of $G$.\\

We also have $O_\FR(H) = O_\FR(G) \cap H$, given any open subgroup $H$ of $G$.
\end{thma}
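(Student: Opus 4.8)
The plan is to identify $R$ simultaneously as a \emph{radical} (the largest normal Fitting-regular subgroup of $G$) and as a \emph{residual} (the smallest normal subgroup with Fitting-degenerate quotient), and to read off (i) and (ii) from this dual description. The engine is a short list of closure properties of $\FR$, resting on the standard functoriality of $F^*$ that I take as a preliminary: for $N$ normal in $G$ one has $F^*(N) \le F^*(G)$, and more precisely $F^*(N) = N \cap F^*(G)$ (the profinite analogues of the classical finite facts). From the definition $\FR$ is closed under quotients, since a nontrivial image of an image of $G$ is an image of $G$. Using $F^*(N) \le F^*(G)$, I would show $\FR$ is closed under extensions: if $N$ is normal in $G$ with $N, G/N \in \FR$ and $G/K$ is a nontrivial image, then either $NK/K \cong N/(N \cap K)$ is a nontrivial normal $\FR$-subgroup of $G/K$, so $1 \ne F^*(NK/K) \le F^*(G/K)$, or else $N \le K$ and $G/K$ is a nontrivial image of $G/N$. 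The same computation gives closure under dense directed joins of normal $\FR$-subgroups. Combining these with the elementary facts that nontrivial pro-$p$ groups and quasisimple groups lie in $\FR$, I would prove the pivotal lemma that $F^*(X) \in \FR$ for \emph{every} profinite $X$, since $F^*(X)$ is a central product, hence an image of a direct product, of its pro-$p$ and quasisimple pieces.

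Now set $R$ to be the join of all normal $\FR$-subgroups of $G$; by the join lemma $R \in \FR$, and $R$ is characteristic as $\Aut(G)$ permutes the normal $\FR$-subgroups. I would first check $G/R \in \FD$: the preimage in $G$ of $F^*(G/R) \in \FR$ is an extension of $R \in \FR$ by an $\FR$-group, hence a normal $\FR$-subgroup, hence contained in $R$, forcing $F^*(G/R) = 1$. For (ii) it remains to see that $G/R$ covers every Fitting-degenerate image $G/K$: the image of $R$ in $G/K$ is a normal $\FR$-subgroup of a Fitting-degenerate group, so trivial (a nontrivial $\FR$-group has nontrivial $F^*$, whereas subnormal subgroups of an $\FD$-group have trivial $F^*$ by monotonicity), whence $R \le K$. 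This simultaneously identifies $R$ with the Fitting-degenerate residual. The forward half of (i) is the same trivial-image argument applied to a subnormal $\FR$-subgroup $S$: its image in $G/R \in \FD$ is subnormal and Fitting-regular, hence trivial, so $S \le R$.

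The converse half of (i) and the open-subgroup statement both rest on $\FR$ being closed under normal (hence subnormal) subgroups, which I would deduce from the self-centralising property of Fitting-regular groups (the generalisation of $C_G(F^*(G)) \le F^*(G)$ advertised in the abstract). Indeed, if $N$ normal in $G \in \FR$ were not Fitting-regular, then in $\bar G = G/O_\FR(N) \in \FR$ the nontrivial normal subgroup $\bar N$ satisfies $F^*(\bar N) = \bar N \cap F^*(\bar G) = 1$, so $[\bar N, F^*(\bar G)] = 1$ and $\bar N$ meets $C_{\bar G}(F^*(\bar G)) = Z(F^*(\bar G))$ trivially, a contradiction. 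Granting this, a subnormal $S \le R$ is subnormal in $R \in \FR$, hence Fitting-regular, which is the converse of (i); and an extension argument through the open normal core shows $\FR$ is inherited by open subgroups.

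For the open-subgroup identity I would treat the normal case first, where it is formal: for $K$ open normal in $G$, the subgroup $O_\FR(K)$ is characteristic in $K$, hence normal in $G$ and Fitting-regular, so $O_\FR(K) \le R$, while $R \cap K$ is an open, hence Fitting-regular, normal subgroup of $K$, giving $O_\FR(K) = R \cap K$. For general open $H$, applying this to the core $K = \mathrm{core}_G(H)$ (open normal in both $G$ and $H$) yields $O_\FR(H) \cap K = O_\FR(K) = R \cap K$, so $\bar A := O_\FR(H)R/R$ is a \emph{finite} Fitting-regular subgroup of $\bar G = G/R \in \FD$, normalised by the open subgroup $\bar H$. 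The step I expect to be the main obstacle, since it is exactly where the profinite phenomenon departs from the finite one, is to show such $\bar A$ is trivial. Here $\bar A$ meets $K' = \mathrm{core}_{\bar G}(\bar H) \in \FD$ trivially (their intersection is a finite, hence Fitting-regular, subnormal subgroup of an $\FD$-group), so $\bar A$ centralises $K'$; then its normal closure $D = \langle \bar A^{\bar G}\rangle$ lies in $C_{\bar G}(K')$, has $D \cap K' \le Z(D)$ with $D/(D \cap K')$ finite, so $D$ is central-by-finite and $D'$ is finite by Schur's theorem. If $D' \ne 1$ it is a nontrivial finite normal subgroup of $\bar G$, forcing $F^*(\bar G) \ne 1$; otherwise $D$ is a nontrivial abelian normal subgroup and some $O_p(\bar G) \ne 1$; either way this contradicts $\bar G \in \FD$. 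Hence $\bar A = 1$, i.e. $O_\FR(H) \le R$, and with the reverse inclusion $R \cap H \le O_\FR(H)$ (as $R \cap H$ is an open, Fitting-regular, normal subgroup of $H$) this gives $O_\FR(H) = O_\FR(G) \cap H$.
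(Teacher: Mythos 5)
Most of your architecture is sound and, in places, genuinely different from the paper's: building $R$ as the radical (join of normal $\FR$-subgroups) and deducing $G/R \in \FD$ from the lemma that $F^*(X) \in \FR$ is a legitimate alternative to the paper's route, which goes through the residual $O^\FD(G)$ and closure of $\FD$ under sub-Cartesian products; likewise your Schur-theorem argument for triviality of $\bar A$ is a workable substitute for the paper's use of Dicman's lemma. The genuine gap is the one step you delegate to the ``self-centralising property'': your only argument that $\FR$ is closed under normal subgroups invokes $C_{\bar G}(F^*(\bar G)) = Z(F(\bar G))$ for $\bar G \in \FR$. That statement is Theorem B of this very paper, proved \emph{after} Theorem A, and its proof depends on exactly the closure property you are trying to establish: the decisive step there concludes that $D = C_K(E(K))$ is Fitting-regular \emph{because it is a normal section of} $G \in \FR$, whence $D = 1$. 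So, within the paper's logical development, your argument is circular, and you offer no independent proof of the self-centralising property; none is readily available, since the natural argument for it needs normal-section closure of $\FR$ as input. This gap is not peripheral: both the converse half of (i) (subnormal subgroups of $R$ are Fitting-regular) and your entire treatment of the open-subgroup identity (which needs $R \cap K \in \FR$, openness-inheritance of $\FR$, and $O_\FR(K) = R \cap K$) rest on this closure property.

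The missing idea is a direct centraliser argument using only the Section 3 material you already allow yourself. Suppose $G \in \FR$ and $N \lhd G$ is not Fitting-regular; replacing $G$ by $G/O^\FD(N)$, we may assume $N$ is non-trivial and Fitting-degenerate. Then $Z(N) \leq F(N) = 1$, so $H = C_G(N)$ meets $N$ trivially; hence $N \cong NH/H$ and $C_{G/H}(NH/H) = 1$. Now $G/H$ is a non-trivial image of $G \in \FR$, so $F^*(G/H) \neq 1$; and since $NH/H$ has trivial centraliser, every non-trivial normal subgroup of $G/H$ meets $NH/H$ non-trivially. Thus $F^*(G/H) \cap NH/H$ is a non-trivial normal subgroup of $NH/H$ which is an $F^*$-group (being a normal subgroup of $G/H$ contained in $F^*(G/H)$), contradicting $F^*(NH/H) = 1$. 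With this lemma established first, the remainder of your proof goes through and Theorem B is never needed.
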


The most important property of the generalised Fitting subgroup in finite group theory is that it always contains its own centraliser.  Given the existence of non-trivial Fitting-degenerate profinite groups, this property does not generalise to all profinite groups.  It does however generalise to Fitting-regular profinite groups:

\begin{thmb}Let $G \in \FR$.  Then $C_G (F^*(G)) = Z(F(G))$.\end{thmb}

We now define some invariants in order to describe sufficient conditions for a profinite group to be Fitting-regular.

\paragraph{Definitions}Given a profinite group $G$, let $d_p(G)$ denote the size of a minimum generating set for a $p$-Sylow subgroup of $G$; this is also equal to the rank of $S/\Phi(S)$, where $S$ is any $p$-Sylow subgroup of $G$.  We write $d_*(G)$ for the supremum of $d_p(G)$ as $p$ ranges over all primes.\\

Given a finitely generated pro-$p$ group $S$, the automorphism group of $S$ acts on the elementary abelian group $S/\Phi(S)$.  However, it need not act irreducibly.  In general, $S/\Phi(S)$ will have a normal series in which each factor is irreducible as an $\Aut(S)$-module, and by the Jordan-H\"{o}lder theorem, the rank of the factors is uniquely determined.  Let $c(S)$ denote the maximum rank which occurs.\\

If $G$ has a finitely generated $p$-Sylow subgroup $S$, define $c_p(G) := c(S)$; otherwise, define $c_p(G) := d_p(G)$.  Note that by definition, $c_p(G) \leq d_p(G)$ for any $G$.  Write $c_*(G)$ for the supremum of $c_p(G)$ as $p$ ranges over all primes.

\paragraph{Definition}Let $G$ be a profinite group.  Say $G$ is \emph{Sylow-finite} if every Sylow subgroup of $G$ is contained in a finite normal subgroup.\\

By Dicman's lemma (see \S 2), this condition is equivalent to requiring each Sylow subgroup to be finite and to have a centraliser in $G$ of finite index.\\

Given a profinite group $G$ for which $c_*(G)$ is finite, we obtain the following description:

\begin{thmc}Let $G$ be a profinite group such that $c_*(G)$ is finite.  Then $G$ is virtually pronilpotent-by-(Sylow-finite)-by-abelian.\end{thmc}

\begin{corn}Let $G$ be a profinite group such that $c_*(G)$ is finite, and let $H$ be a subgroup of $G$.  Then $H$ is Fitting-regular.\end{corn}

We also note that some alternative restrictions on Sylow subgroups do not enforce Fitting-regularity.

\begin{propn}(i) There exist non-trivial Fitting-degenerate prosoluble groups, all of whose Sylow subgroups are finite elementary abelian groups.\\
 
(ii) There exist non-trivial Fitting-degenerate prosoluble groups which are countably based pro-$\{p,q\}$ groups, for any distinct primes $p$ and $q$.\end{propn}

\paragraph{Remark}There are also examples in the literature of non-trivial finitely generated profinite groups which are Fitting-degenerate.  Indeed, Lucchini (\cite{Luc}) has constructed a just infinite $2$-generated profinite group which can easily be seen to be Fitting-degenerate.\\

Fitting-degeneracy can also be characterised in terms of finite images, even though the finite images are themselves automatically Fitting-regular.

\begin{thmd}Let $G$ be a non-trivial profinite group.  Then $G \in \FD$ if and only if the following holds, for any $x \in G \setminus 1$:
 
(*) There is an open normal subgroup $K$ of $G$, depending on $x$, such that $G/K$ is primitive and $xK$ is not contained in $F^*(G/K)$.\end{thmd}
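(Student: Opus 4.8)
The plan is to prove both implications separately, treating the ``if'' direction as essentially formal and concentrating the real work on the ``only if'' direction, which is where the genuinely profinite content lies (note that every non-trivial \emph{finite} group has non-trivial $F^*$, so the statement is vacuous in the finite case and must be detected by a limiting argument over finite quotients).

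For the ``if'' direction I would argue by contraposition: suppose $F^*(G) \neq 1$ and fix any $x \in F^*(G) \setminus 1$. For every open normal $K$ the image of $F^*(G)$ in $G/K$ lies inside $F^*(G/K)$ — the image of a normal pronilpotent subgroup is again normal pronilpotent, and the image of a component is, since components are perfect with simple central quotient, either trivial or again a component, so $F(G)$ and $E(G)$ map into $F(G/K)$ and $E(G/K)$ respectively. Hence $xK \in F^*(G/K)$ for every $K$, so condition (*) fails for this $x$; thus if (*) holds for all non-trivial $x$ then $F^*(G)=1$, i.e.\ $G \in \FD$. The same inclusion gives $F^*(G) \subseteq \pi_K^{-1}(F^*(G/K))$ for each $K$, which I will reuse below.

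For the ``only if'' direction, assume $G \in \FD$, so $F^*(G)=1$ and in particular $F(G)=1$. Since the Frattini subgroup of a profinite group is pronilpotent and characteristic, $\Phi(G) \subseteq F(G) = 1$; and as closed maximal subgroups of a profinite group are open, the open maximal subgroups of $G$ intersect trivially. Each open maximal $M$ yields a primitive finite quotient $G/K_M$ with $K_M = \mathrm{Core}_G(M)$, every primitive quotient arises this way, and $\bigcap_M K_M = \Phi(G) = 1$. For a finite primitive group $Q$ an inspection of the minimal normal subgroups (the affine, single non-abelian, and double non-abelian cases) shows $F^*(Q) = \mathrm{soc}(Q)$ and that this socle is self-centralising. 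Writing $\pi_K \colon G \to G/K$, the whole direction reduces to the single assertion that $D := \bigcap_K \pi_K^{-1}(\mathrm{soc}(G/K)) = 1$, the intersection ranging over all primitive quotients: for $D=1$ and any $x \neq 1$ there is a primitive $K$ with $xK \notin \mathrm{soc}(G/K) = F^*(G/K)$, which is exactly (*).

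The main obstacle is to show $D \subseteq F^*(G)$; combined with the inclusion $F^*(G)\subseteq D$ noted above this gives $D = F^*(G) = 1$. Here the input is that, the primitive $K$ intersecting trivially, $D$ embeds as a closed subgroup into the product of its images $DK/K$, each of which is a normal subgroup of the primitive group $G/K$ lying inside $\mathrm{soc}(G/K)$; a short Goursat argument (the abelian part of the socle is central and the non-abelian part is perfect and centreless, so no non-split central extension can occur) shows each such image is semisimple, a direct product of simple groups. I would then argue that a closed normal subgroup of $G$ whose image in every primitive quotient is semisimple is pro-(semisimple): its pro-$p$ constituents assemble into normal pronilpotent subgroups lying in $F(G)$, and its non-abelian constituents into components, so that the structure theory of $F^*$ already quoted — that $F^*(G)$ is a central product of pro-$p$ and quasisimple groups absorbing every subnormal pronilpotent subgroup and every component — forces $D \subseteq F^*(G)$. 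I expect the delicate step to be precisely this last passage: turning ``every primitive image is semisimple'' into an honest central-product decomposition of $D$ \emph{as a normal subgroup of $G$}, keeping the abelian and non-abelian parts separated coherently across the inverse system and ensuring that the resulting quasisimple and pronilpotent pieces are genuinely subnormal in $G$ rather than merely in its finite quotients.
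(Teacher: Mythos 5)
Your overall skeleton matches the paper's: the ``if'' direction by contraposition is the same, and for the ``only if'' direction both you and the paper observe that Fitting-degeneracy forces $\Phi(G) \le F(G) = 1$, so the cores of maximal open subgroups yield primitive finite quotients separating the points of $G$, after which everything hinges on a single claim: if $xK \in F^*(G/K)$ for every primitive core $K$, then $x \in F^*(G)$. The trouble is that this claim --- which you correctly isolate in the form $D := \bigcap_K \pi_K^{-1}(\mathrm{soc}(G/K)) \le F^*(G)$ --- is exactly what you do not prove, and your final paragraph concedes as much. The gap is genuine and is not a routine compactness matter: the family of primitive cores is \emph{not} directed (already in $C_p \times C_q$ the two primitive cores intersect trivially while the group itself is not primitive), so no standard inverse-limit argument applies to it, and Proposition 3.4 of the paper requires $DN/N \le F^*(G/N)$ for \emph{all} open normal $N$, not merely the primitive cores. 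Your proposed repair --- assembling ``pro-semisimple'' constituents of $D$ into pronilpotent normal subgroups and components of $G$ --- is precisely the part that is hard to make coherent across a non-directed system, and as written it restates the difficulty rather than resolving it.

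The step can be closed with one extra finite-group input: the class of finite $F^*$-groups (quasinilpotent groups) is a formation, i.e.\ closed under subdirect products; this follows from the characterisation that a finite group is quasinilpotent if and only if it induces inner automorphisms on each of its chief factors, or can be quoted from Huppert--Blackburn. Granting this, each $D/(D\cap K) \cong DK/K$ is quasinilpotent, being normal in the finite $F^*$-group $F^*(G/K)$; hence for finitely many primitive cores $K_1, \dots, K_n$ the quotient $D/(D \cap K_1 \cap \dots \cap K_n)$, a subdirect product of the $D/(D\cap K_i)$, is again quasinilpotent. These finite intersections \emph{do} form a directed family of open normal subgroups of $D$ with trivial intersection, hence a neighbourhood base of $1$ by compactness, so every finite quotient of $D$ is quasinilpotent; Proposition 3.4 ((iv) implies (i), then (i) implies (ii)) then gives $D \le F^*(G) = 1$, which is the claim. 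For comparison, the paper compresses all of this into the assertion that membership of $xK$ in $F^*(G/K)$ for every primitive image ``would imply $x \in F^*(G)$,'' so it is terse at the very same spot; but it has Proposition 3.4 in place and the assertion is correct, whereas your route through socles and an unproved structural decomposition of $D$ does not go through without the formation property (or an equivalent substitute).
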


Finally, we note that to determine whether or not a group is Fitting-regular, it suffices to consider countably based images:

\begin{thme}Let $G$ be a profinite group that is not Fitting-regular.  Then there is a normal subgroup $N$ of $G$ such that $G/N$ is countably based and Fitting-degenerate.\end{thme}

\section{Preliminaries}

We recall the theorems of Zassenhaus and Mal'cev on soluble linear groups.

\begin{thm}[Zassenhaus \cite{Zas}]
Let $G$ be a soluble linear group of degree $n$ over any field.  Then the derived length of $G$ is bounded by a function of $d$.\end{thm}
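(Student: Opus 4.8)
The plan is to bound the derived length by induction on the degree $n$, reducing in stages to the case of a \emph{primitive} irreducible group and then exploiting the rigid structure such groups possess. First I would pass to the algebraic closure: since $G \leq \mathrm{GL}_n(K) \leq \mathrm{GL}_n(\bar{K})$, it suffices to prove the bound over an algebraically closed field, where Clifford theory and simultaneous diagonalisation are available. Write $V = \bar{K}^n$.

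The first reduction disposes of reducibility. If $G$ fixes a proper nonzero subspace $W$, then in block-triangular form the kernel $U$ of the map $G \to \mathrm{GL}(W) \times \mathrm{GL}(V/W)$ consists of the matrices $\begin{pmatrix} I & * \\ 0 & I \end{pmatrix}$, which form an abelian group; the image is a soluble linear group on each of $W$ and $V/W$, both of degree $< n$. Hence $U$ is abelian and $G/U$ has derived length at most $\max(d(\dim W), d(\dim V/W))$, so the derived length of $G$ exceeds that bound by at most one, and induction applies. I may therefore assume $G$ is irreducible.

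Next I would strip away imprimitivity. If $V = V_1 \oplus \cdots \oplus V_k$ is a system of imprimitivity with $k \geq 2$ and $G$ permuting the summands transitively, then $G$ maps to $\mathrm{Sym}(k)$ with soluble image, and the derived length of a soluble subgroup of $\mathrm{Sym}(k)$ is bounded by a function of $k \leq n$ (a standard fact about soluble permutation groups). The kernel stabilises each $V_i$ and embeds subdirectly into $\prod_i \mathrm{GL}(V_i)$ with $\dim V_i = n/k < n$, so by induction its derived length is bounded. Adding the two contributions reduces everything to the case where $G$ is primitive.

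The crux, and the step I expect to be the main obstacle, is the primitive case. Here I would first observe, via Clifford's theorem, that primitivity forces every abelian normal subgroup of $G$ to act by scalars: otherwise its distinct isotypic character components would yield a nontrivial block decomposition of $V$. Consequently the centre $Z$ of $G$ consists exactly of the scalar matrices in $G$ and contains every abelian normal subgroup; since $Z$ is abelian it contributes at most one to the derived length, so it remains to bound the derived length of $G/Z$. For this I would analyse a suitable nilpotent normal subgroup $F$ (the Fitting subgroup, whose Sylow subgroup at the characteristic prime is trivial by Clifford). Since its abelian characteristic subgroups are scalar, each such Sylow subgroup is of symplectic type, so $F/Z$ carries the structure of a nondegenerate symplectic space over various $\bF_p$ of total dimension controlled by $n$, and $G/C_G(F)$ embeds into the corresponding finite symplectic groups, which have bounded derived length. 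The delicate point is to show that $C_G(F)$ is itself scalar --- the linear analogue of the fact that in a finite soluble group the Fitting subgroup contains its own centraliser --- which pins $G/Z$ down to a group of derived length bounded in terms of $n$. Tracking the accumulated bounds through the three reductions then yields a bound on the derived length of $G$ depending only on $n$.
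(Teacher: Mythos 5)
The paper itself gives no proof of this statement: it is quoted as Zassenhaus's theorem with a citation to \cite{Zas}, and only the existence of the bounding function $\db$ is used later, so your attempt can only be judged on its own terms. Your architecture is the classical one (pass to the algebraic closure, then the reducible/imprimitive/primitive trichotomy via Clifford's theorem), and your first two reductions are sound as written: the kernel of the block-triangular map is abelian, the kernel of the permutation action on a system of imprimitivity embeds subdirectly in a product of groups of smaller degree, and soluble subgroups of $\mathrm{Sym}(k)$ have derived length bounded in terms of $k$.

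The gap is in the primitive case, and you have located it but not closed it. Two of the tools you invoke are theorems about \emph{finite} groups that do not apply verbatim to an infinite soluble linear group: P.~Hall's theorem that a $p$-group whose characteristic abelian subgroups are cyclic is of symplectic type presupposes a finite $p$-group, whereas your $F$ is a priori infinite and not even obviously nilpotent (it is only generated by nilpotent normal subgroups); and $C_G(F(G)) \leq F(G)$ is a fact about finite soluble groups, which is exactly the ``delicate point'' you leave open. Both gaps close with one elementary device. First, since every abelian normal subgroup is scalar, an induction on nilpotency class shows every nilpotent normal subgroup $N$ has class at most $2$ with $N' \leq Z$ (for class $c \geq 3$, the subgroup $\gamma_{c-1}(N)$ satisfies $[\gamma_{c-1}(N),\gamma_{c-1}(N)] \leq \gamma_{2c-2}(N) = 1$, so it is abelian and normal in $G$, hence scalar, and the class drops); by Fitting's theorem this passes to $F$, giving $F' \leq Z$. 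Second, if $C_G(F) \not\leq Z$, let $D$ be the last term of the derived series of $C_G(F)$ not contained in $Z$: then $D' \leq Z$, so $D$ is a nilpotent normal subgroup of $G$, whence $D \leq F \cap C_G(F) = Z(F) \leq Z$, a contradiction; thus $C_G(F) = Z$. Third, the commutator map now induces a nondegenerate alternating pairing on $F/Z$ with values in the group of $n$-th roots of unity, and the dimension formula for faithful irreducible representations of symplectic-type groups bounds $|F/Z|$ by $n^2$; the kernel of the $G$-action on $F/Z$ is abelian over $Z$ by the usual stability argument, and the image is a soluble subgroup of the finite group $\Aut(F/Z)$, whose order and hence derived length is bounded in terms of $n$ alone. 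With these three points supplied, your outline becomes a complete proof; without them, the crux of the theorem is asserted rather than proved.
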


Given a finite-dimensional vector space $V$, we say that a subgroup of the general linear group $GL(V)$ of $V$ is \emph{triangularisable} if it is conjugate to a subgroup of the group $Tr(V)$ of upper-triangular matrices with respect to some basis.  Note that the derived subgroup of $Tr(\bF^n_p)$ is a $p$-group for any integer $n$, and hence any triangularisable group over a finite-dimensional $\bF_p$-vector space has this property.  We will write $GL(n,p)$ for $GL(\bF^n_p)$.

\begin{thm}[Mal'cev \cite{Mal}]Let $G$ be a soluble linear group of degree $n$ over any field.  Then $G$ has a triangularisable normal subgroup $T$, such that $|G:T|$ is bounded by a function of $n$.\end{thm}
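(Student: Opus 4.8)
The plan is to first reduce to the case where the ground field is algebraically closed: replacing $K$ by its algebraic closure $\overline{K}$ changes neither $G$ nor the index of any subgroup, and for the intended application over $\bF_p$ it suffices to triangularise over $\overline{K}$, since a finite group all of whose elements are unipotent, hence $p$-elements, is a $p$-group. I would then induct on $n$, using repeatedly that a subgroup of a triangularisable group is triangularisable with respect to the same basis; in particular the intersection of a triangularisable subgroup with other subgroups, and the core in $G$ of such a subgroup, remain triangularisable, so normality can always be arranged at the cost of enlarging the index by a function of $n$. If $G$ is reducible, I fix a proper nonzero invariant subspace $W$; applying the inductive hypothesis to the images of $G$ in $GL(W)$ and $GL(V/W)$ yields triangularisable normal subgroups of index bounded in terms of $n$, whose preimages intersect in a normal subgroup $N$ of bounded index acting triangularly on both $W$ and $V/W$, hence triangularly on $V$ with respect to a basis adapted to the flag $0 < W < V$. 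Passing to the core of $N$ in $G$ preserves triangularisability and yields a normal subgroup of $G$ of index bounded by a function of $n$.

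The substance lies in the irreducible case. Here I would take $A$ to be a maximal abelian normal subgroup of $G$. First I would show that any unipotent normal subgroup $U$ of $G$ acts trivially: by Kolchin's theorem $U$ has a nonzero fixed space $V^U$, which is $G$-invariant because $U$ is normal, so $V^U = V$ by irreducibility. Applying this to the unipotent part of $A$ shows that $A$ acts diagonalisably, giving a weight decomposition $V = \bigoplus_{\chi} V_\chi$ into at most $n$ nonzero weight spaces, which $G$ permutes transitively by Clifford's theorem. If more than one weight occurs, the kernel $G_0$ of the permutation action $G \to \mathrm{Sym}(\{V_\chi\})$ has index at most $n!$ and stabilises each $V_\chi$, hence acts reducibly; the reducible case together with the core argument then finishes this subcase.

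There remains the primitive subcase, where $A$ consists of scalar matrices, and this is the \textbf{main obstacle}. Here I would pass to the Fitting subgroup $F = F(G)$ and invoke the classical fact that in this situation $F$ is nilpotent of symplectic type with $C_G(F) = Z(F)$ and $|F : Z(F)| \le n^2$, its Sylow subgroups being central products of extraspecial and cyclic groups whose faithful irreducible representations have degree equal to the square root of the index of the centre. Consequently $G/Z(F) = G/C_G(F)$ embeds in the automorphism group of $F$, and the image, acting trivially on the scalar subgroup $Z(F)$, lands in a symplectic-type group of order bounded by a function of $n$; as $G/F$ is soluble, Zassenhaus's theorem caps its derived length and one obtains $|G : F|$, and hence $|G : Z(F)|$, bounded in terms of $n$, with $Z(F)$ abelian and therefore triangularisable. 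For infinite $G$ over $\overline{K}$ I would first reduce to the relevant finite quotient, the non-scalar part being forced to be finite once it embeds into the finite symplectic-type group. I expect the careful verification of the symplectic-type structure of $F$, and the bound on soluble subgroups of the ambient symplectic group, to be the most delicate points of the argument.
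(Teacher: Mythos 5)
First, a point of orientation: the paper does not prove this theorem at all --- it is stated with a citation to Mal'cev's 1951 paper and used as a black box, and only ever applied (through Corollary 2.3) to \emph{finite} soluble subgroups of $GL(n,p)$. So your sketch can only be measured against the classical proof in the literature (Suprunenko, Wehrfritz, Dixon), and in fact it reproduces that proof's architecture faithfully: induction on the degree, reduction through the reducible and imprimitive cases via Clifford theory, and the symplectic-type structure of the Fitting subgroup in the primitive case. Your finite-case argument is essentially sound: once every abelian normal subgroup is scalar, every characteristic abelian subgroup of $F=F(G)$ is scalar hence central, P. Hall's theorem gives the central-product structure, homogeneity of $V|_F$ (forced by $Z(F)$ being scalar) together with the representation theory of extraspecial-type groups gives $|F:Z(F)|\le n^2$, and then $G/Z(F)=G/C_G(F)$ embeds in the group of automorphisms of $F$ fixing $Z(F)$ pointwise, whose order is at most $|\mathrm{Hom}(F/Z(F),Z(F))|\cdot|\Aut(F/Z(F))|$, a function of $n$ alone (the Hom term is bounded because torsion subgroups of $\overline{K}^{*}$ are cyclic). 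Your observation that triangularisation over $\overline{\bF_p}$ suffices for the paper's Corollary 2.3 is also correct, since a finite group of unipotent matrices is a $p$-group by Cauchy's theorem.

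Two genuine defects remain. (1) The clause ``as $G/F$ is soluble, Zassenhaus's theorem caps its derived length and one obtains $|G:F|$ bounded'' is a non sequitur: bounded derived length never bounds order ($\bZ$ is abelian and infinite). Fortunately it is also unnecessary: the embedding of $G/Z(F)$ just described already bounds $|G:Z(F)|$, and you need no bound on \emph{soluble} subgroups of the symplectic group, since all of $\Aut(F/Z(F))$ has order at most $(n^2)!$. (2) The infinite case is not actually handled. For an abstract infinite abelian group $A$ the ``unipotent part'' need not lie in $A$ (Jordan components live only in the Zariski closure), $F(G)$ need not be nilpotent for infinite soluble $G$ and $C_G(F(G))=Z(F(G))$ is a finite-group fact, and ``reduce to the relevant finite quotient'' is a hope rather than an argument. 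The standard repair is to take Zariski closures at the outset: $\overline{G}$ is still soluble and normalises the closures of normal subgroups of $G$, Lie--Kolchin triangularises the connected component $\overline{G}^{\circ}$, and your induction then serves exactly to bound the component group $|\overline{G}:\overline{G}^{\circ}|$; in the primitive case $\overline{G}^{\circ}$ has trivial unipotent radical (Kolchin plus irreducibility), hence is a torus, hence is scalar, so $\overline{G}$ is finite modulo scalars and your finite argument takes over. With those two repairs the proposal becomes a complete proof; as it stands, the finite case --- which is all this paper needs --- is the part that is genuinely in order.
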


\begin{cor}\label{malcor}Let $G$ be a finite soluble subgroup of $GL(n,p)$.  Then there are functions $\eb(n)$ and $\db(n)$ which do not depend on $p$, such that $(G^{\eb(n)})'$ is a $p$-group and $G^{(\db(n))}$ is trivial.\end{cor}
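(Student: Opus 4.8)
The plan is to obtain the two conclusions separately, taking $\db(n)$ directly from Zassenhaus's theorem and building $\eb(n)$ out of Mal'cev's theorem. For the derived series, Zassenhaus's theorem provides a bound $d(n)$, independent of the field and hence of $p$, on the derived length of any soluble linear group of degree $n$. Since $G \leq GL(n,p)$ is such a group, setting $\db(n) := d(n)$ gives $G^{(\db(n))} = 1$ at once.

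For the power subgroup, I would first apply Mal'cev's theorem to produce a triangularisable normal subgroup $T$ of $G$ with $|G:T| \leq f(n)$, where $f$ is the Mal'cev bound and is again independent of $p$. The aim is then to choose $\eb(n)$, depending only on $n$, large enough that every $\eb(n)$-th power of $G$ lies in $T$, so that $G^{\eb(n)} \leq T$. To do this, note that the exponent of the finite quotient $G/T$ divides its order $|G:T| \leq f(n)$; hence taking $\eb(n) := \mathrm{lcm}(1,2,\dots,f(n))$ makes $\eb(n)$ a multiple of the exponent of $G/T$ for every admissible $G$, whence $x^{\eb(n)} \in T$ for all $x \in G$ and therefore $G^{\eb(n)} \leq T$. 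As $T$ is triangularisable over $\bF_p$, its derived subgroup $T'$ is a $p$-group by the observation preceding the statement, and $(G^{\eb(n)})' \leq T'$ is then a $p$-group as well.

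The one point requiring care is the uniformity in $p$: the whole argument depends on the bounds $d(n)$ and $f(n)$ being functions of the degree alone, with no dependence on the underlying field. Both Zassenhaus and Mal'cev supply exactly this, since their hypotheses range over arbitrary fields. The prime $p$ then enters only through the final step, where triangularisability over $\bF_p$ forces the derived subgroup to be a $p$-group; it plays no role in the definitions of $\eb(n)$ or $\db(n)$. No step presents a serious obstacle, the only subtlety being this bookkeeping of which quantities are allowed to depend on $p$.
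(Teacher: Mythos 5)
Your proof is correct and takes essentially the same approach as the paper's: $\db(n)$ comes straight from Zassenhaus, and $\eb(n)$ is chosen via Mal'cev's bounded-index triangularisable normal subgroup $T$ so that the exponent of $G/T$ divides $\eb(n)$, forcing $G^{\eb(n)} \leq T$ and hence $(G^{\eb(n)})' \leq T'$ a $p$-group. The only difference is that you make explicit the choice $\eb(n) = \mathrm{lcm}(1,2,\dots,f(n))$, which the paper leaves implicit in the phrase ``by considering the exponent of $G/T$''.
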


\begin{proof}Zassenhaus's theorem implies the existence of a suitable function $\db$.  By Mal'cev's theorem, $G$ has a triangularisable normal subgroup $T$ of index bounded by a function of $n$ alone.  It follows that we can find $\eb(n)$ such that $G^{\eb(n)}$ is necessarily a subgroup of $T$, by considering the exponent of $G/T$.  It follows that $G^{\eb(n)}$ is triangularisable, so its derived group is a $p$-group.\end{proof}

The functions $\eb$ and $\db$ will be used later, though for the purposes of this paper we do not need to know their rate of growth, merely their existence.\\

We will also make use of Dicman's lemma, a general theorem in the theory of infinite groups, in the proofs of Theorems A and C.

\begin{lem}[Dicman \cite{Dic}]Let $G$ be an abstract group, and let $H$ be a subgroup generated by a finite set of elements $X$.  Suppose that each element of $X$ is of finite order and has finitely many conjugates in $G$.  Then $H$ is finite.\end{lem}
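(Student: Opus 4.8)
The plan is to reduce at once to the classical situation in which the generating set is closed under conjugation, and then to run the standard ``collecting process'' to bound word length.

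First I would replace $X$ by the set $Y := \bigcup_{x \in X} x^G$, the union of the $G$-conjugacy classes of the elements of $X$. Since $X$ is finite and each of its elements has only finitely many conjugates, $Y$ is a finite set; every element of $Y$ is conjugate to some element of $X$ and so has finite order; and $Y$ is by construction invariant under conjugation in $G$. Because $X \subseteq Y$, it suffices to prove that $\langle Y \rangle$ is finite, for then $H = \langle X \rangle \leq \langle Y \rangle$ is finite as well.

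Next I would note that, since $Y$ is conjugation-closed and each $y \in Y$ satisfies $y^{-1} = y^{\,\mathrm{ord}(y)-1}$, every element of $\langle Y \rangle$ is in fact a product of elements of $Y$ involving no inverses. Write $k = |Y|$ and let $n$ bound the orders of the elements of $Y$. The key claim is that every element of $\langle Y \rangle$ admits such a positive expression in which each fixed $y \in Y$ occurs at most $\mathrm{ord}(y)-1$ times; granting this, any element is a product of at most $k(n-1)$ elements of $Y$, so the number of elements of $\langle Y \rangle$ is bounded by the number of words of length at most $k(n-1)$ in the alphabet $Y$, which is finite, and we are done.

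The heart of the argument, and the step I expect to be the main obstacle, is establishing this claim by a collecting process. I would take, for a given element, a positive expression $z_1 z_2 \cdots z_m$ of elements of $Y$ of minimal length $m$, and suppose for contradiction that some $y$ occurs at least $\mathrm{ord}(y)$ times. Using the identity $zy = y\,(y^{-1}zy)$ together with the normality of $Y$ (so that $y^{-1}zy \in Y$), one slides occurrences of $y$ leftward until they become adjacent; the delicate point to verify is that conjugating a factor $z_i \neq y$ by $y$ never creates a new copy of $y$, since $y z_i y^{-1} = y$ would force $z_i = y$, so the multiplicity of $y$ is preserved throughout the collection. Once $\mathrm{ord}(y)$ copies of $y$ have been brought together they multiply to the identity and may be deleted, shortening the word and contradicting minimality of $m$. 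This proves the claim, and hence the lemma; the only real care required lies in the bookkeeping of the collecting moves and in tracking the invariance of the multiplicity of $y$.
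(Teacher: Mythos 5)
Your proof is correct. Note that the paper does not prove this lemma at all: it is quoted from the literature with a citation to Dicman's original article, so there is no internal argument to compare against. What you give is the standard proof of what is usually called Dicman's (or Dietzmann's) lemma: enlarge $X$ to the finite conjugation-invariant set $Y$, eliminate inverses using the finiteness of orders, and bound word length by pigeonhole plus the collection process. The one genuinely delicate point --- that sliding a copy of $y$ past a factor $z \neq y$ via $zy = y(y^{-1}zy)$ replaces $z$ by an element of $Y$ that is again distinct from $y$, so the multiplicity of $y$ is invariant and the final cancellation of $\mathrm{ord}(y)$ adjacent copies really does shorten the word --- is exactly the point you identify and justify, so the argument is complete.
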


In addition, we need some standard facts from profinite group theory, which are direct analogues of finite results.

\paragraph{Definition} We define a \emph{supernatural number} to be a formal product $\prod_{p \in \bP} p^{n_p}$ of prime powers, where each $n_p$ is either a non-negative integer or $\infty$.\\

Multiplication of supernatural numbers is defined in the obvious manner; note that any set of supernatural numbers has a supernatural least common multiple.  Also, by unique factorisation, the set of supernatural numbers contains a copy of the multiplicative semigroup of natural numbers, which we may regard as the \emph{finite} supernatural numbers.  A \emph{$\pi$-number} is a supernatural number $\prod_{p \in \bP} p^{n_p}$ for which $n_p=0$ for all $p$ in $\pi'$.

\paragraph{Definition} Let $G$ be a profinite group and let $H$ be a subgroup.  We define the \emph{index} $|G:H|$ of $H$ in $G$ to be the least common multiple of $|G/N:HN/N|$ as $N$ ranges over all open normal subgroups of $G$, and the \emph{order} of $G$ to be $|G:1|$.  In particular, the order of a profinite group is a $\pi$-number if and only if the group is pro-$\pi$.  (Note that in contrast to finite group theory, the supernatural order of a profinite group is not determined by the cardinality of the underlying set.)  Given an element $x$ of $G$, we define the order of $x$ to be $|\langle x \rangle : 1|$.

\begin{thm}Let $G$ be a profinite group, and let $H$ be a subgroup.  Then $|G|$ factorises as a supernatural number as $|G:H||H|$.  If $H$ is normal then $|G:H|=|G/H|$.\end{thm}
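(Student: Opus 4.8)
The plan is to verify the factorisation one prime at a time, exploiting the fact that for supernatural numbers the least common multiple is computed prime-by-prime: writing $v_p$ for the exponent of $p$ (allowed to be $\infty$), we have $v_p(\mathrm{lcm}_i\, a_i) = \sup_i v_p(a_i)$. Thus it suffices to prove $v_p(|G|) = v_p(|G:H|) + v_p(|H|)$ for every prime $p$, with the convention that $\infty$ absorbs addition. The starting point is ordinary finite group theory applied to each quotient: for an open normal subgroup $N$ of $G$, Lagrange's theorem in $G/N$ gives $|G/N| = |G/N : HN/N|\cdot|HN/N|$, and the second isomorphism theorem identifies $|HN/N| = |H : H\cap N|$. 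First I would record that, as $N$ ranges over the open normal subgroups of $G$, the subgroups $H \cap N$ form a cofinal family of open normal subgroups of $H$ (every open normal subgroup of $H$ contains some $H\cap N$, since $H$ carries the subspace topology); hence $v_p(|H|) = \sup_N v_p(|H:H\cap N|)$, computed over the same index set as the other two quantities. This common index set is what makes the comparison possible.

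Taking the supremum over $N$ then splits into two inequalities. The upper bound is immediate: for each $N$ the finite identity gives $v_p(|G/N|) = v_p(|G/N:HN/N|) + v_p(|H:H\cap N|) \leq v_p(|G:H|) + v_p(|H|)$, and taking $\sup_N$ on the left yields $v_p(|G|) \leq v_p(|G:H|)+v_p(|H|)$.

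The lower bound is where the one genuine subtlety lies, and I expect it to be the main obstacle: the suprema defining $v_p(|G:H|)$ and $v_p(|H|)$ need not be attained at the same $N$, so one cannot simply add two separately-chosen witnesses. The resolution is a monotonicity observation. If $N' \subseteq N$, then the natural surjection $G/N' \to G/N$ carries $HN'/N'$ onto $HN/N$, so that $|G/N:HN/N|$ divides $|G/N':HN'/N'|$; while $H\cap N' \subseteq H \cap N$ shows $|H:H\cap N|$ divides $|H:H\cap N'|$. Thus both factors are non-decreasing under refinement. Given witnesses $N_1$ and $N_2$ making the index term and the $H$-term respectively large, I would set $N = N_1\cap N_2$, at which both terms are simultaneously at least as large; letting the two witnesses run (the $\infty$ cases being handled by the same refinement) then gives $v_p(|G|) \geq v_p(|G:H|)+v_p(|H|)$, completing the prime-by-prime equality and hence the first assertion.

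For the normal case, when $H$ is normal in $G$ I would observe that $|G/N:HN/N| = |G:HN| = |G/HN|$, that $HN$ is an open normal subgroup of $G$ containing $H$, and that conversely every open normal subgroup of $G$ containing $H$ arises in this way. Since the open normal subgroups of $G/H$ are exactly the $M/H$ with $M$ open normal and $H \subseteq M$, and $(G/H)/(M/H) \cong G/M$, the defining least common multiples of $|G:H|$ and of $|G/H|$ range over the same finite orders, giving $|G:H| = |G/H|$ directly.
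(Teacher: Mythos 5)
Your proof is correct. Note that the paper itself gives no argument for this theorem: its ``proof'' is simply a citation of Wilson's book \cite{Wil}, so there is no in-paper proof to compare against; what you have written is a correct, self-contained version of the standard argument that the citation points to. The three points where such a proof could go wrong are all handled properly: (1) you justify that the subgroups $H \cap N$ are cofinal among the open normal subgroups of $H$, which is what legitimises computing $v_p(|H|)$ over the same index set as $v_p(|G|)$ and $v_p(|G:H|)$; (2) you notice that the two suprema in the lower bound need not be attained at a common $N$, and you fix this with the monotonicity-under-refinement observation together with the fact that open normal subgroups are closed under finite intersection (downward directedness), which is exactly the right repair; and (3) in the normal case you identify the open normal subgroups of $G/H$ with the open normal subgroups of $G$ containing $H$, and check that both defining least common multiples range over the same set of finite indices $|G/M|$. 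The only stylistic remark is that in the monotonicity step the divisibility $|G/N : HN/N| \mid |G/N' : HN'/N'|$ is seen most directly from $|G:HN| \mid |G:HN'|$ (since $HN' \subseteq HN$ and both have finite index), rather than via the surjection $G/N' \to G/N$; but your argument via the surjection is also valid.
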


\begin{proof}See \cite{Wil}.\end{proof}

\paragraph{Definition} Let $G$ be a profinite group, and let $H$ be a subgroup.  We say $H$ is a \emph{$\pi$-Hall subgroup} of $G$ if $H$ is a pro-$\pi$ group, and $|G:H|$ is a $\pi'$-number.  We also refer to $\{p\}$-Hall subgroups as \emph{$p$-Sylow subgroups}.

\begin{thm}[Sylow's and Hall's theorems for profinite groups]
Let $G$ be a profinite group.  If $G$ is prosoluble, let $\pi$ be an arbitrary set of primes; otherwise, let $\pi$ consist of a single prime.\\

(i) $G$ has a $\pi$-Hall subgroup.\\

(ii) Any two $\pi$-Hall subgroups of $G$ are conjugate.\\

(iii) Every pro-$\pi$ subgroup of $G$ is contained in some $\pi$-Hall subgroup.\end{thm}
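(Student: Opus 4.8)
The plan is to reduce each of the three statements to the corresponding theorem for finite groups, by exploiting the realisation of $G$ as the inverse limit $\varprojlim_N G/N$ over its open normal subgroups $N$ (directed by reverse inclusion, since the intersection of two open normal subgroups is again open and normal), together with the standard compactness principle that an inverse limit of non-empty finite sets over a directed poset is non-empty. The case distinction on $\pi$ is used for one reason only: to ensure that the appropriate finite theorem (ordinary Sylow when $\pi$ is a single prime, Hall's theorem when $G$, and hence every $G/N$, is soluble) is available in each finite quotient.

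For part (i) I would first record the elementary divisibility facts underlying everything. If $\phi \colon A \to B$ is a surjection of finite groups and $H$ is a $\pi$-Hall subgroup of $A$, then $\phi(H)$ is a $\pi$-group whose index $|B:\phi(H)|$ divides $|A:H|$ and is therefore a $\pi'$-number, so $\phi(H)$ is a $\pi$-Hall subgroup of $B$; dually, for any $\pi$-Hall subgroup $H$ of the profinite group $G$, each image $HN/N$ is a $\pi$-Hall subgroup of $G/N$. Using the first fact, the sets $X_N$ of $\pi$-Hall subgroups of $G/N$, which are non-empty by the finite theorem, form an inverse system under the projections $G/M \to G/N$ for $M \leq N$. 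A compatible family $(K_N) \in \varprojlim X_N$ then determines a closed subgroup $H = \varprojlim K_N \leq G$; since an inverse limit of surjections of finite sets surjects onto each term, $H$ maps onto every $K_N$, so $H$ is pro-$\pi$ (its finite continuous images are $\pi$-groups) and $|G:H| = \operatorname{lcm}_N |G/N:K_N|$ is a $\pi'$-number. This is the desired $\pi$-Hall subgroup.

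For parts (ii) and (iii) I would run the same machine with different fibres. For conjugacy, given two $\pi$-Hall subgroups $H_1, H_2$, I take $C_N$ to be the set of elements of $G/N$ conjugating $H_1 N/N$ to $H_2 N/N$; this is non-empty by finite conjugacy, and the $C_N$ form an inverse system because conjugation commutes with the quotient maps. Any $g \in \varprojlim C_N$ satisfies $H_1^g N = H_2 N$ for every $N$, whence $H_1^g = \bigcap_N H_1^g N = \bigcap_N H_2 N = H_2$, using that a closed subgroup equals the intersection of its products with the open normal subgroups. For domination, given a pro-$\pi$ subgroup $P$, I let $Y_N$ be the set of $\pi$-Hall subgroups of $G/N$ containing the finite $\pi$-subgroup $PN/N$; this is non-empty by the finite theorem and forms an inverse system by the divisibility fact above, and a compatible family $(K_N)$ yields a $\pi$-Hall subgroup $H$ with $P \leq \bigcap_N L_N = H$, where $L_N$ denotes the preimage of $K_N$ in $G$.

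The only genuine obstacle here is bookkeeping rather than mathematics: one must check that all three inverse systems are directed and that every transition map is well-defined, i.e. that it lands in the correct fibre, and one must invoke correctly the principle that an inverse limit of non-empty finite sets is non-empty. Everything else is the finite Sylow and Hall theory applied quotient by quotient, with the hypothesis on $\pi$ serving solely to guarantee that those finite theorems apply.
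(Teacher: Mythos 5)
Your proposal is correct: the reduction to finite quotients via the directed inverse system of open normal subgroups, the compactness principle that an inverse limit of non-empty finite sets is non-empty, and the identities $H=\bigcap_N HN$ and $HN/N = K_N$ (from surjectivity of the transition maps within a compatible family) are exactly the ingredients needed, and the case hypothesis on $\pi$ is used only to make the finite Sylow/Hall theorems available in each quotient, as you say. The paper itself gives no argument here --- it simply cites Wilson's \emph{Profinite groups} --- and your proof is essentially the standard one found in that reference, so there is nothing to compare beyond noting that you have filled in what the citation points to.
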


\begin{proof}See \cite{Wil}.\end{proof}

\begin{cor}Let $G$ be a profinite group, and let $H$ be a subgroup of $G$.  Let $\mcS$ be a set of subsets of the prime numbers such that $\bigcup \mcS = \mu$, and suppose $H$ contains a $\pi$-Hall subgroup of $G$ for every $\pi \in \mcS$.  Suppose also that either $G$ is prosoluble or that $\mu = \bP$.  Then $H$ contains a $\mu$-Hall subgroup of $G$.\end{cor}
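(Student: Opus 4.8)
The plan is to reduce the whole statement to a computation with supernatural orders, and then to apply the profinite Hall theorem to the subgroup $H$ itself. The key observation is that a $\pi$-Hall subgroup, being pro-$\pi$ with index a $\pi'$-number, necessarily captures the entire $\pi$-part of the supernatural order of $G$.

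First I would pin down the order of each assumed Hall subgroup. Fix $\pi \in \mcS$ and let $H_\pi \leq H$ be a $\pi$-Hall subgroup of $G$. Since $H_\pi$ is pro-$\pi$ (so $|H_\pi|$ is a $\pi$-number) while $|G:H_\pi|$ is a $\pi'$-number, the factorisation $|G| = |G:H_\pi|\,|H_\pi|$ supplied by the order theorem forces $|H_\pi|$ to equal exactly the $\pi$-part of $|G|$. As $H_\pi \leq H$, this $\pi$-part divides $|H|$.

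Next I would exploit the hypothesis $\bigcup \mcS = \mu$. For each prime $p \in \mu$ choose some $\pi \in \mcS$ with $p \in \pi$; then the $p$-part of $|G|$ divides the $\pi$-part of $|G|$, and hence divides $|H|$. Letting $p$ range over all of $\mu$, and using that distinct primes contribute coprime factors to a supernatural number, I conclude that the $\mu$-part of $|G|$ divides $|H|$. Since $|H|$ in turn divides $|G|$, the $\mu$-part of $|H|$ divides the $\mu$-part of $|G|$; combining the two divisibilities shows that the $\mu$-part of $|H|$ equals the $\mu$-part of $|G|$.

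Finally I would produce the required subgroup. If $G$ is prosoluble then so is $H$, and the profinite Hall theorem furnishes a $\mu$-Hall subgroup $K$ of $H$; if instead $\mu = \bP$, take $K = H$, which is trivially its own $\bP$-Hall subgroup. In either case $K$ is pro-$\mu$ and $|K|$ equals the $\mu$-part of $|H|$, which by the previous step is the $\mu$-part of $|G|$. Then $|G:K| = |G|/|K|$ is the $\mu'$-part of $|G|$, a $\mu'$-number, so $K \leq H$ is a $\mu$-Hall subgroup of $G$. The entire substance of the argument lies in the supernatural arithmetic; the only point requiring care is that the profinite Hall theorem yields a $\mu$-Hall subgroup for arbitrary $\mu$ only when $H$ is prosoluble, and this is precisely why the hypothesis separates into the prosoluble case and the case $\mu = \bP$, the latter needing no appeal to the theorem since the relevant $\mu$-Hall subgroup of $H$ is $H$ itself.
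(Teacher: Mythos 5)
Your overall strategy --- apply Hall's theorem inside $H$ and split into the prosoluble case and the case $\mu = \bP$ --- is sound, and your first two paragraphs are correct as far as they go. But the final step contains a genuine gap: you cannot divide supernatural numbers. From $|G| = |G:K|\,|K|$ and $|K| = {}$the $\mu$-part of $|G|$, you conclude that $|G:K|$ equals the $\mu'$-part of $|G|$; this cancellation is invalid as soon as infinite exponents occur, because multiplication of supernatural numbers is not injective ($p^\infty \cdot p^k = p^\infty$ for every $k$). Concretely, take $G = \bZ_p \times \bZ_p$ (the direct product of two copies of the additive group of $p$-adic integers) and $K = \bZ_p \times 1$: then $|K| = p^\infty$ is exactly the $p$-part of $|G| = p^\infty$, yet $|G:K| = p^\infty$, so $K$ is not a $p$-Sylow subgroup of $G$. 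This is precisely why the paper defines Hall subgroups by the condition that $|G:K|$ be a $\pi'$-number rather than by a condition on $|K|$: for profinite groups, having order equal to the $\pi$-part of $|G|$ is strictly weaker than being a $\pi$-Hall subgroup. The same defect undermines your case $\mu = \bP$: there, the equality of orders $|H| = |G|$ that your argument produces does not give $|G:H| = 1$ (the same example shows this), which is what is needed for $H$ to be a $\bP$-Hall subgroup of $G$.

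The repair is to work with indices throughout instead of orders, using multiplicativity of indices for closed subgroups $K \leq H \leq G$, namely $|G:K| = |G:H|\,|H:K|$ (standard; see \cite{Wil}). For each prime $p \in \mu$ choose $\pi \in \mcS$ with $p \in \pi$; since $H$ contains a $\pi$-Hall subgroup $H_\pi$ of $G$, the index $|G:H|$ divides $|G:H_\pi|$, which is a $\pi'$-number, so $p$ does not divide $|G:H|$. Hence $|G:H|$ is a $\mu'$-number. If $\mu = \bP$ this already finishes: $H$ is trivially pro-$\bP$, and $|G:H| = 1$, so $H$ itself is a $\bP$-Hall subgroup of $G$ (indeed $H = G$, since $HN = G$ for every open normal subgroup $N$ and $H$ is closed). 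If instead $G$ is prosoluble, then so is $H$; Hall's theorem gives a $\mu$-Hall subgroup $K$ of $H$, and then $|G:K| = |G:H|\,|H:K|$ is a product of two $\mu'$-numbers, hence a $\mu'$-number, so $K \leq H$ is a $\mu$-Hall subgroup of $G$.
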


Here is a standard result about automorphisms of pro-$p$ groups.

\begin{prop}Let $S$ be a countably based pro-$p$ group, and let $H$ be a profinite group of automorphisms of $S$.  Let $(S_1,S_2,\dots)$ be a descending sequence of $H$-invariant normal subgroups of $S$, such that the intersection of the $S_i$ is exactly $\Phi(S)$, and suppose $H$ acts as a pro-$p$ group on $S_i/S_{i+1}$ for every $i$.  Then $H$ is a pro-$p$ group.\end{prop}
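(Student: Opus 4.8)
The plan is to reduce the statement to a statement about how $H$ acts on the Frattini quotient $S/\Phi(S)$, and then exploit the fact that $H$ already acts as a pro-$p$ group on each successive factor $S_i/S_{i+1}$.

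First I would recall the standard fact that the kernel of the action of $\Aut(S)$ on $S/\Phi(S)$ is a pro-$p$ group; call this kernel $K$. This is the profinite analogue of the classical result that an automorphism of a finite $p$-group acting trivially on the Frattini quotient has $p$-power order (one can see it by picking a minimal generating set and tracking the induced action on the successive terms of a suitable central series, or by citing the corresponding result in the literature on pro-$p$ groups). Consequently $H$ is a pro-$p$ group provided its image $\bar H$ in $\Aut(S/\Phi(S))$ is a pro-$p$ group, since $H$ is then an extension of a pro-$p$ group $H \cap K$ by the pro-$p$ group $\bar H$. So it suffices to show that $\bar H$ is a pro-$p$ group acting on the elementary abelian pro-$p$ group $V := S/\Phi(S)$.

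Next I would set $V_i := S_i/\Phi(S)$, regarded as subgroups of $V$. By hypothesis the $S_i$ are $H$-invariant, descend, and intersect in $\Phi(S)$, so the $V_i$ are $\bar H$-invariant, descend, and intersect trivially in $V$; moreover $\bar H$ acts as a pro-$p$ group on each $V_i/V_{i+1} \cong S_i/S_{i+1}$ (modulo $\Phi(S)$). The goal is to glue these finitely-or-countably-many pro-$p$ actions together into a single pro-$p$ action on $V$. The key point is that an element of $\bar H$ that acts as a pro-$p$ element on every factor of the series, and for which the series intersects trivially, must itself be a pro-$p$ element: one filters $\bar H$ by the kernels $N_i$ of its action on $V/V_i$, and checks that each quotient $N_i / N_{i+1}$ embeds (via the action on $V_i/V_{i+1}$ together with the commutator maps into the lower factors) into a pro-$p$ group, so that $\bar H$ is an inverse limit of pro-$p$ groups and hence pro-$p$. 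Here the countably based hypothesis on $S$ ensures that the series $(V_i)$ can be taken to be a genuine descending chain with trivial intersection indexed by $\bN$, so that $\bar H = \varprojlim \bar H / N_i$ and no issues of uncountable cofinality arise.

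The main obstacle I anticipate is the gluing step: being a pro-$p$ group on each factor of an infinite series does not, a priori, force the whole action to be pro-$p$, because an element could have non-$p$-part that is ``spread out'' across infinitely many factors. The way around this is the observation that the action of $\bar H$ on $V/V_i$ is an action on a group which is (pro-$p$)-by-(pro-$p$)-by-$\cdots$ with finitely many terms when $i$ is finite, so by induction on $i$ using that an extension of a pro-$p$ group by a pro-$p$ group is pro-$p$, the image of $\bar H$ in $\Aut(V/V_i)$ is a pro-$p$ group for each $i$; passing to the inverse limit over $i$, and using that $\bar H$ embeds in $\varprojlim \Aut(V/V_i)$ because $\bigcap_i V_i = 1$, gives that $\bar H$ is pro-$p$. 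Verifying that this inverse limit embedding is faithful, which is exactly where $\bigcap_i V_i = 1$ and the countably based hypothesis are used, is the technical heart of the argument.
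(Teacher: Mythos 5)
Your proposal is essentially correct in outline, but note that the paper does not prove this proposition at all: it is quoted as a known result with the proof ``See \cite{DDMS}'', so your argument is a reconstruction of the cited proof rather than an alternative to anything actually in the paper. Your two-step plan --- (1) the subgroup of $H$ acting trivially on $V = S/\Phi(S)$ is pro-$p$, and (2) the image $\bar H$ in $\Aut(V)$ is pro-$p$, by filtering along the kernels $N_i$ of the actions on $V/V_i$, where $N_i/N_{i+1}$ is an extension of a closed subgroup of the pro-$p$ image of $H$ in $\Aut(V_i/V_{i+1})$ by a stability kernel embedding in $\mathrm{Hom}(V/V_i,V_i/V_{i+1})$ (which has exponent $p$), and then realising $\bar H$ as a closed subgroup of $\prod_i \bar H/N_i$ by compactness --- is exactly the standard route, and each of these steps can be made rigorous.

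Two points need repair, however. First, your induction has no base case as written: the hypotheses say nothing about the action of $H$ on $S/S_1$, so the image of $\bar H$ in $\Aut(V/V_1)$ is not controlled by anything, and your claim that the image of $\bar H$ in $\Aut(V/V_i)$ is pro-$p$ for every $i$ is unjustified. Indeed, if $S_1 < S$ is allowed the proposition is literally false: for odd $p$ take $S = C_p \times C_p$ (so $\Phi(S)=1$), $S_1 = C_p \times 1$, $S_i = 1$ for $i \geq 2$, and let $H \cong C_{p-1}$ consist of the automorphisms $\mathrm{diag}(1,d)$; then $H$ preserves the series and acts trivially on every factor $S_i/S_{i+1}$, yet is not a $p$-group. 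The series must therefore be read as starting at $S_1 = S$, as it does in the paper's own application (where the series is the preimage of a normal series of $S/\Phi(S)$); once this is made explicit your base case is trivial. Second, your justification of step (1) is the finitely generated argument. When $S$ is countably based but not finitely generated, $\Aut(S)$ is not naturally profinite (so one should speak of $H \cap K$ rather than $K$ being pro-$p$), the lower $p$-series of $S$ is not a chain of open subgroups, and the standard literature statements, including the one in \cite{DDMS}, assume finite generation; moreover, having no nontrivial elements of order coprime to $p$ does not force a profinite group to be pro-$p$, so the elementwise classical argument does not suffice. What is true, and what you actually use, is that $H \cap K$ is pro-$p$; this follows either by reducing to finite quotients (continuity of the action gives $S$ a base of $H$-invariant open normal subgroups $U$; the image of a $q$-Sylow subgroup of $H \cap K$ in $\Aut(S/U)$ is a finite $q$-group lying in the kernel of $\Aut(S/U) \to \Aut\bigl((S/U)/\Phi(S/U)\bigr)$, which is a finite $p$-group, hence is trivial), or by running your own gluing argument a second time along the lower $p$-series of $S$, whose terms are closed, $H$-invariant, intersect in $1$, and have elementary abelian factors on which $H \cap K$ acts trivially. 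With these two repairs the proof is complete.
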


\begin{proof}See \cite{DDMS}.\end{proof}

\begin{cor}Let $S$ be a finitely generated pro-$p$ group with $c(S) = c$.  Let $H$ be a prosoluble group of automorphisms of $S$.  Let $K = H^{\eb(c)}H^{(\db(c))}$.  Then $K'$ is a pro-$p$ group.\end{cor}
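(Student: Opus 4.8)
The plan is to apply the preceding Proposition to the group $K'$ acting on $S$, after first constructing a suitable $K'$-invariant filtration of $S$ that refines the Frattini quotient and has factors of bounded dimension. First I would pass to $V = S/\Phi(S)$, a finite $\mathbb{F}_p$-vector space on which $H$ acts through its image in $GL(V)$. Choose a composition series $0 = V_0 < V_1 < \dots < V_r = V$ of $V$ as an $\Aut(S)$-module; by the definition of $c = c(S)$, every factor $V_j/V_{j-1}$ has dimension at most $c$. Pulling this back along $S \to S/\Phi(S)$ and reversing the order gives a descending chain $S = S_1 \supseteq S_2 \supseteq \dots \supseteq S_{r+1} = \Phi(S)$ of subgroups containing $\Phi(S)$. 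Each $S_i$ is normal in $S$ (since $S/\Phi(S)$ is abelian) and is $H$-invariant, hence $K'$-invariant, because the $V_j$ are $\Aut(S)$-invariant and $H \le \Aut(S)$; moreover $S_i/S_{i+1}$ is $\Aut(S)$-isomorphic to a composition factor, so has dimension $m_i \le c$.

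Next I would analyse the action of $K'$ on each factor $W_i = S_i/S_{i+1}$. Since $H$ is prosoluble, its image $\bar H_i \le GL(W_i) \cong GL(m_i,p)$ is a finite soluble group, so Corollary \ref{malcor} applies with $n = m_i$. Arranging (harmlessly, as only the existence of $\eb$ and $\db$ is relevant) that $\db$ is non-decreasing and that $\eb(m)$ divides $\eb(c)$ for all $m \le c$, I obtain $\bar H_i^{(\db(c))} = 1$ and $\bar H_i^{\eb(c)} \le \bar H_i^{\eb(m_i)}$, the latter being triangularisable. Hence the image $\bar K_i$ of $K = H^{\eb(c)} H^{(\db(c))}$ in $GL(W_i)$ equals $\bar H_i^{\eb(c)}$ and is triangularisable, so $\bar K_i'$ is a $p$-group. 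Since the image of $K'$ in $GL(W_i)$ is exactly $\bar K_i'$, the group $K'$ acts as a finite $p$-group, in particular as a pro-$p$ group, on each $W_i$.

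Finally, I would invoke the Proposition with $K'$ in the role of the automorphism group: $S$ is finitely generated and hence countably based, $K'$ is a profinite group of automorphisms of $S$, the chain $(S_i)$ is a descending sequence of $K'$-invariant normal subgroups of $S$ whose intersection is exactly $\Phi(S)$, and $K'$ acts as a pro-$p$ group on every factor $S_i/S_{i+1}$. The Proposition then yields that $K'$ is a pro-$p$ group, as required.

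The one point requiring care is the bookkeeping with $\eb$ and $\db$: the invariant $c$ bounds the dimensions of the $\Aut(S)$-composition factors, but these factors may have differing dimensions $m_i \le c$, so the single pair of exponents $\eb(c), \db(c)$ must be shown to work simultaneously for all of them. This is precisely the reason for the monotonicity normalisation above, and it is also essential to take the $\Aut(S)$-composition series rather than an $H$-composition series, since $c = c(S)$ is defined via the irreducible $\Aut(S)$-modules and it is this that supplies the uniform dimension bound feeding into Corollary \ref{malcor}.
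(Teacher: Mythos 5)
Your proof is correct and follows essentially the same route as the paper's: a normal series of $S/\Phi(S)$ with $\Aut(S)$-irreducible factors of rank at most $c$, Corollary \ref{malcor} applied to the finite soluble image of $H$ on each factor, and then the Proposition to conclude that $K'$ is pro-$p$. The only difference is cosmetic: where you apply the corollary with $n = m_i$ and then normalise $\eb$ and $\db$ to be monotone under divisibility, the paper simply regards the action on each factor as a subgroup of $GL(c,p)$, so that the single pair $\eb(c), \db(c)$ works without renormalisation.
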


\begin{proof}Consider the action of $H$ on a normal series for $S/\Phi(S)$ with maximum rank $c$.  On each factor, $H$ must act as a subgroup of $GL(c,p)$, which is necessarily finite and soluble since $H$ is prosoluble.  By Corollary \ref{malcor}, $K'$ must act on each factor as a $p$-group.  Hence $K'$ is pro-$p$ by the proposition.\end{proof}

\paragraph{Definition}The \emph{Frattini subgroup} $\Phi(G)$ of a profinite group $G$ is the intersection of all maximal open subgroups of $G$.\\

The role of the Frattini subgroup in finite group theory generalises to profinite groups, and the class of pronilpotent groups can be characterised in terms of its Sylow structure in a similar manner to the class of finite nilpotent groups.

\begin{lem}(i) Let $G$ be a profinite group.  Then $G$ is pronilpotent if and only if it is the Cartesian product of its Sylow subgroups, or equivalently, if and only if every Sylow subgroup is normal.\\

(ii) Let $G$ be a profinite group, and let $K$ be a normal subgroup of $G$ containing $\Phi(G)$.  Then $K$ is pronilpotent if and only if $K/\Phi(G)$ is pronilpotent.  In particular, $\Phi(G)$ is pronilpotent.\\

(iii) Let $G$ be a profinite group.  If $X$ is a set of elements of $G$, then $X$ generates $G$ if and only if the image of $X$ in $G/\Phi(G)$ generates $G/\Phi(G)$.\\
 
(iv) Let $S$ be a pro-$p$ group.  Then $S/\Phi(S)$ is the largest elementary abelian image of $S$, and the cardinality of a minimum generating set of $S$ is exactly the rank of $S/\Phi(S)$.\end{lem}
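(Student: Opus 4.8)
The plan is to reduce each part to the corresponding finite statement through the inverse limit, using the Sylow and Hall theory recorded above together with the fact that a profinite group is pronilpotent precisely when every finite continuous image is nilpotent. Since parts (ii) and (iv) both invoke the non-generator property of the Frattini subgroup, I would establish (iii) first. One direction of (iii) is immediate, as the image of a generating set generates the quotient. For the converse, suppose $X$ maps onto a generating set of $G/\Phi(G)$, so that $\langle X,\Phi(G)\rangle = G$, yet $H:=\overline{\langle X\rangle}\ne G$. As $H$ is a proper closed subgroup there is an open normal $N$ with $HN\ne G$, and a maximal subgroup of the finite group $G/N$ containing $HN/N$ pulls back to a maximal open subgroup $M$ of $G$ with $H\le M$. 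Then $\Phi(G)\le M$ and $X\subseteq M$, whence $\langle X,\Phi(G)\rangle\le M\ne G$, a contradiction.

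For (i), the key observation is that for any open normal $N$ the image in $G/N$ of a $p$-Sylow subgroup $S$ of $G$ is again a $p$-Sylow subgroup, since $|G/N:SN/N|$ divides the $p'$-number $|G:S|$. If $G$ is pronilpotent then each $G/N$ is nilpotent, hence the direct product of its normal Sylow subgroups; passing to the limit shows that every Sylow subgroup of $G$ is normal and that $G$ is the Cartesian product of its Sylow subgroups. Conversely, if every Sylow subgroup of $G$ is normal then each $G/N$ has all its Sylow subgroups normal and is therefore nilpotent, so $G$ is pronilpotent. This closes the cycle of equivalences.

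For (ii), the forward implication holds because a quotient of a pronilpotent group is pronilpotent. For the converse, let $S$ be a $p$-Sylow subgroup of $K$. Then $S\Phi(G)/\Phi(G)$ is the unique, hence characteristic, $p$-Sylow subgroup of the pronilpotent group $K/\Phi(G)$; being characteristic in the normal subgroup $K/\Phi(G)$ of $G/\Phi(G)$, it is normal in $G/\Phi(G)$, i.e. $S\Phi(G)$ is normal in $G$. Since $S$ is a $p$-Sylow subgroup of $S\Phi(G)$, the Frattini argument (valid here by Sylow conjugacy) gives $G = N_G(S)\,S\Phi(G) = N_G(S)\Phi(G)$, and part (iii) forces $G = N_G(S)$. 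Thus every Sylow subgroup of $K$ is normal in $G$, so $K$ is pronilpotent by (i); taking $K=\Phi(G)$ gives the final clause. Finally, for (iv) every maximal open subgroup of a pro-$p$ group $S$ has index $p$ and is normal, so $\Phi(S)=\overline{S^p[S,S]}$ and $S/\Phi(S)$ is elementary abelian; any elementary abelian quotient $S/T$ satisfies $T\supseteq\Phi(S)$ and so factors through $S/\Phi(S)$, which is therefore the largest elementary abelian image. Viewing $S/\Phi(S)$ as a topological $\bF_p$-vector space and applying (iii), a minimum generating set of $S$ has cardinality equal to its rank.

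I expect the converse of (ii) to be the main obstacle. The naive approach — applying the Frattini argument inside $K$ and then deleting $\Phi(G)$ — breaks down because $\Phi(G)$ need not be contained in $\Phi(K)$, so the non-generator property is not directly available inside $K$. The decisive point is instead that $S\Phi(G)$ is normal in the whole of $G$ (characteristic-in-normal), which allows the Frattini argument to be run in $G$ and the non-generator property of $\Phi(G)$ in $G$, supplied by (iii), to be applied exactly where it is needed.
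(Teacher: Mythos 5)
Your proposal is correct, but note that the paper itself contains no proof of this lemma: it is quoted as standard background with the proof deferred to Wilson's book \cite{Wil}. So what you have written is a self-contained substitute for a citation, following what is essentially the standard textbook route: reduce everything to finite quotients, with the only genuinely delicate point being the converse direction of (ii). Your handling of that point is exactly right, and your closing diagnosis is the correct one: since $\Phi(G)$ need not be contained in $\Phi(K)$, the non-generator property must be exploited in $G$ rather than in $K$, and this is enabled by observing that $S\Phi(G)/\Phi(G)$, being the unique $p$-Sylow subgroup of the pronilpotent group $K/\Phi(G)$, is characteristic there and hence normal in $G/\Phi(G)$; the Frattini argument (licensed by the profinite Sylow conjugacy theorem quoted in Section 2) then gives $G = N_G(S)\Phi(G)$, and (iii) forces $N_G(S)=G$. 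Two places are compressed but need only routine additions. In (i), ``passing to the limit'' should be expanded: normality of a Sylow subgroup $S$ follows from $S = \bigcap_N SN$ with each $SN/N$ normal in the nilpotent group $G/N$, and one must also check that the multiplication map from the external Cartesian product of the Sylow subgroups onto $G$ is a topological isomorphism (injective by index considerations, surjective because its image is closed and dense). In (iv), the equality of the minimal cardinality of a generating set of $S/\Phi(S)$ with its rank is immediate only in the finitely generated case; when $S/\Phi(S) \cong \prod_{i \in I} \bF_p$ with $I$ infinite, ``rank'' must be read as $|I|$ rather than the abstract $\bF_p$-dimension (which is strictly larger, e.g. $2^{\aleph_0}$ versus $\aleph_0$ for countable $I$), after which a short separability/weight argument completes the claim. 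Neither point affects the correctness of your approach; they are the sort of details the cited source carries out in full.
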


\begin{proof}See \cite{Wil}.\end{proof}

Given a profinite group $G$ and an integer $r$, we define the \emph{lower $r$-series} $\Phi^k_r=\Phi^k_r(G)$ of $G$ as follows: $\Phi^0_r=G$, and thereafter $\Phi^{k+1}_r$ is the smallest subgroup such that $\Phi^k_r/\Phi^{k+1}_r$ is a central section of $G$ of exponent dividing $r$.  We also define $\Phi^\infty_r$ to be $\bigcap_{i \geq 0} \Phi^k_r$; by considering finite images, we see that $G/\Phi^\infty_r$ is the largest pronilpotent image of $G$ to involve only the primes dividing $r$.  Note that for $p$ a prime and $S$ a pro-$p$ group, we have $\Phi^1_p(S)=\Phi(S)$ and $\Phi^\infty_p(S)=1$.\\

The following theorem was given by Tate in the finite case, but the generalisation from finite to profinite groups is immediate.

\begin{thm}[Tate \cite{Tat}]
Let $G$ be a profinite group and let $r$ be an integer.  Suppose $H$ is a subgroup of $G$ such that $|G:H|$ is coprime to $r$ and $\Phi^1_r(H) = \Phi^1_r(G) \cap H$.  Then $\Phi^k_r(H) = \Phi^k_r(G) \cap H$, for $k$ any positive integer and also for $k=\infty$.\end{thm}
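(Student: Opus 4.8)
The engine of the argument is the finite case, which I take as given; the task is to reduce the profinite statement to it. The plan rests on three routine observations about the series $\Phi^k_r$. First, writing $\Phi^{k+1}_r(G)=\overline{[\Phi^k_r(G),G]\,(\Phi^k_r(G))^r}$ exhibits each term as generated by commutators and $r$-th powers, so (by induction, using that a continuous surjection carries the closure of such a set to the corresponding closure in the image) every continuous surjection $\pi$ satisfies $\pi(\Phi^k_r(G))=\Phi^k_r(\pi(G))$; the case $k=\infty$ follows because the $\Phi^k_r(G)$ form a descending chain of compact sets, so $\pi$ commutes with their intersection. Second, the same formula gives monotonicity $\Phi^k_r(H)\le\Phi^k_r(G)$ for $H\le G$, whence the inclusion $\Phi^k_r(H)\subseteq\Phi^k_r(G)\cap H$ holds for all $k$ (including $k=\infty$) with no hypotheses. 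Third, $\Phi^k_r(H)$ is the intersection of the open subgroups $\Phi^k_r(H)(H\cap N)$ of $H$ as $N$ ranges over the open normal subgroups of $G$. Hence the whole theorem reduces to proving the reverse inclusion $\Phi^k_r(G)\cap H\subseteq\Phi^k_r(H)(H\cap N)$ for a cofinal family of $N$.

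Fix such an $N$ and pass to $\overline G=G/N$, $\overline H=HN/N$. The index $|\overline G:\overline H|$ divides $|G:H|$ and so is coprime to $r$, and $\Phi^k_r(\overline G)$, $\Phi^k_r(\overline H)$ are the images of $\Phi^k_r(G)$, $\Phi^k_r(H)$ by the first observation. If the degree-one hypothesis descends, i.e. $\Phi^1_r(\overline H)=\Phi^1_r(\overline G)\cap\overline H$, then the finite theorem of Tate applies in $\overline G$ and gives $\Phi^k_r(\overline G)\cap\overline H=\Phi^k_r(\overline H)$ for every $k\le\infty$. Since the image of $\Phi^k_r(G)\cap H$ lies in $\Phi^k_r(\overline G)\cap\overline H=\Phi^k_r(\overline H)$, which is the image of $\Phi^k_r(H)$, we recover exactly $\Phi^k_r(G)\cap H\subseteq\Phi^k_r(H)(H\cap N)$. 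Thus everything comes down to producing a cofinal family of $N$ for which the degree-one hypothesis descends.

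Unwinding definitions, $\Phi^1_r(\overline H)=\Phi^1_r(\overline G)\cap\overline H$ is equivalent to $\Phi^1_r(G)N\cap H\subseteq\Phi^1_r(H)(H\cap N)$, and this is the one genuinely non-formal point: it is a modular-law identity that can fail for a given $N$, since in general $\Phi^1_r(G)N\cap HN$ properly contains $(\Phi^1_r(G)\cap H)N$. When $H$ is open the difficulty evaporates: taking $N$ inside the (open, normal) core of $H$ we have $N\le H$, so Dedekind's modular law yields $\Phi^1_r(G)N\cap H=(\Phi^1_r(G)\cap H)N=\Phi^1_r(H)N$ directly, and such $N$ are cofinal; this is the case I expect the word ``immediate'' to refer to.

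For general (merely closed) $H$ the cofinality of the good family of $N$ is the main obstacle, and I would obtain it from the extra input that the index is coprime to $r$. Concretely, the hypothesis says the natural map $H/\Phi^1_r(H)\to G/\Phi^1_r(G)$ of compact abelian groups of exponent dividing $r$ is injective; combining it with the (continuous) transfer, whose composite with this inclusion is multiplication by $|G:H|$ and hence invertible on groups of exponent dividing $r$, upgrades it to a split injection. A choice of closed complement then lets me select the open normal subgroups $N$ so that $N\Phi^1_r(G)/\Phi^1_r(G)$ meets the image of $H$ exactly in the image of $H\cap N$, which is precisely the descended hypothesis; these $N$ are cofinal, and the reduction above finishes the proof. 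The delicate part, and the one deserving care, is this transfer-and-splitting step together with the verification that the resulting family of $N$ is genuinely cofinal; the rest is the formal bookkeeping of the first two paragraphs.
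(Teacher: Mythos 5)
Your formal framework (observations one through three, the reduction to descent of the degree-one hypothesis along a cofinal family of open normal $N$, and the open case via Dedekind's law) is correct. The genuine gap is your final paragraph, and unfortunately that is exactly the case the paper needs: in its applications $H$ is a $p$-Sylow subgroup, so $|G:H|$ is an \emph{infinite supernatural} number. For such $H$ the transfer homomorphism $G/\Phi^1_r(G)\to H/\Phi^1_r(H)$ simply does not exist: the transfer is defined only for subgroups of finite index, and it cannot be obtained as a limit over open overgroups $U\supseteq H$, because the transfers $U/\Phi^1_r(U)\to V/\Phi^1_r(V)$ run in the wrong direction relative to the maps that compute $H/\Phi^1_r(H)$. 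Worse, ``multiplication by $|G:H|$'' is meaningless when $|G:H|$ is an infinite supernatural number --- there is no residue of, say, $\prod_{p\neq 2}p^{\infty}$ modulo $r$ --- so the identity $\mathrm{cores}\circ\mathrm{res}=[|G:H|]$ has no content here. Finally, even granting a splitting, you never construct the cofinal family of good $N$; you only assert that a choice of complement ``lets you select'' it, and you yourself flag this as the delicate point. So the proof fails precisely at its only non-formal step.

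The irony is that coprimality enters by a far more elementary mechanism than transfer. The subgroup $H\Phi^1_r(G)$ is closed and normal (it contains $G'$), its index divides $|G:H|$ and so is an $r'$-number, while $G/H\Phi^1_r(G)$ is an image of $G/\Phi^1_r(G)$ and so has order involving only primes dividing $r$; hence $G=H\Phi^1_r(G)$. Combined with your hypothesis this makes the natural map $H/\Phi^1_r(H)\to G/\Phi^1_r(G)$ a continuous bijection of compact groups, hence a topological isomorphism --- no complement is needed. Now given any open normal $M$, the image of $H\cap M$ in $G/\Phi^1_r(G)$ is open (transport openness through the isomorphism), so $W:=(H\cap M)\Phi^1_r(G)$ is an \emph{open} normal subgroup of $G$, and $N:=M\cap W$ is open, normal, contained in $M$, and satisfies $H\cap N=H\cap M$, whence $N\leq (H\cap N)\Phi^1_r(G)$. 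Then
\[ H\cap \Phi^1_r(G)N \;\subseteq\; H\cap (H\cap N)\Phi^1_r(G) \;=\; (H\cap N)\bigl(H\cap\Phi^1_r(G)\bigr) \;=\; \Phi^1_r(H)(H\cap N), \]
the middle equality being the modular law (legitimate since $H\cap N\leq H$): this is exactly your descended hypothesis, for a cofinal family of $N$, and it subsumes your open case. Substituting this for your last paragraph yields a complete proof along your lines. For what it is worth, the paper offers no argument at all --- it declares the generalisation from Tate's finite theorem ``immediate'' --- so there is no authorial proof to compare against; your first two paragraphs together with this repair are a reasonable substantiation of that claim.
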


This theorem has important consequences for fusion in profinite groups, given a finitely generated Sylow subgroup.  Foremost of these are the following:

\begin{cor}Let $G$ be a profinite group, such that $G$ has a finitely generated $p$-Sylow subgroup $S$.\\

(i)Let $N$ be a normal subgroup of $G$ of finite index such that $S \cap N \leq \Phi(S)$.  Then $\Phi^\infty_p(N)$ is a normal $p'$-Hall subgroup of $N$.\\

(ii)The quotient $G/O_{p'}(G)$ is virtually pro-$p$.\end{cor}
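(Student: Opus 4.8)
The plan is to deduce both parts from Tate's theorem, the bulk of the work being the verification of its hypothesis in part (i).

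For part (i), set $T = S \cap N$. Since $N$ is normal in $G$, $T$ is a $p$-Sylow subgroup of $N$ and $|N:T|$ is a $p'$-number; moreover $S$ normalises $N$ and hence normalises $T = S \cap N$, so $T$ is normal in $S$, while by hypothesis $T \le \Phi(S)$. I would apply Tate's theorem with $r = p$ to the pair $(N,T)$: the index $|N:T|$ is coprime to $p$, so provided $\Phi^1_p(T) = \Phi^1_p(N) \cap T$, the theorem yields $\Phi^\infty_p(T) = \Phi^\infty_p(N) \cap T$. As $T$ is pro-$p$ we have $\Phi^\infty_p(T) = 1$, whence $\Phi^\infty_p(N) \cap T = 1$. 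Now $\Phi^\infty_p(N)$ is normal in $N$, so its $p$-Sylow subgroup is $\Phi^\infty_p(N) \cap T = 1$ and it is therefore pro-$p'$; since $N/\Phi^\infty_p(N)$ is the largest pro-$p$ quotient of $N$, the index $|N:\Phi^\infty_p(N)|$ is a $p$-number, and $\Phi^\infty_p(N)$ is a $p'$-Hall subgroup. It is characteristic in $N$, hence normal in $G$.

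The one nontrivial point is thus the first-term identity $\Phi^1_p(T) = \Phi^1_p(N) \cap T$, and this is where I expect the difficulty to lie. The inclusion $\Phi(T) = \Phi^1_p(T) \subseteq \Phi^1_p(N) \cap T$ is immediate from $T' \le N'$ and $T^p \le N^p$. For the reverse inclusion $\Phi^1_p(N) \cap T \le \Phi(T)$ I would pass to finite quotients $G/U$ (with $U$ open and normal in $G$ and $U \le N$), observing that the relations $T \le \Phi(S)$ and ``$T$ normal in $S$'' descend; there $\Phi^1_p$ is just the subgroup $N'N^p$, so the claim becomes $T \cap N'N^p = \Phi(T)$, which is precisely the assertion that the finite group $N/U$ has a normal $p$-complement, namely $O^p(N/U) = \Phi^\infty_p(N/U)$. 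I would establish this using the transfer homomorphism $V \colon N \to T/\Phi(T)$ into the elementary abelian quotient: evaluated on $T$ it is multiplication by the integer $|N:T|$, which is coprime to $p$, modified by the image of the focal subgroup, so $V|_T$ is injective --- and hence $T \cap N'N^p = \Phi(T)$ --- as soon as every $N$-fusion of elements of $T$ is trivial modulo $\Phi(T)$. The heart of the matter is to show that $T \le \Phi(S)$, together with the normality of $T$ in $S$, forces exactly this fusion control; I expect checking this (an argument in the spirit of the normal $p$-complement theorems of Burnside, Frobenius and Tate) to be the main obstacle.

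Finally, part (ii) follows from part (i) by choosing $N$ appropriately. Since $S$ is finitely generated, $S/\Phi(S)$ is finite, so $\Phi(S)$ is open in $S$; as the subgroups $S \cap U$ with $U$ open and normal in $G$ form a base of neighbourhoods of the identity in $S$, there is an open normal subgroup $N$ of $G$ with $S \cap N \le \Phi(S)$. Applying part (i), $\Phi^\infty_p(N)$ is a normal $p'$-Hall subgroup of $N$; being characteristic in $N$ it is normal in $G$, and being pro-$p'$ it lies in $O_{p'}(G)$. Consequently the image of $N$ in $G/O_{p'}(G)$ is a quotient of the pro-$p$ group $N/\Phi^\infty_p(N)$, hence pro-$p$, and it is open because $|G:N|$ is finite. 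Thus $G/O_{p'}(G)$ has an open pro-$p$ subgroup, that is, it is virtually pro-$p$.
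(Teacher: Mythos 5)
Your proof of part (i) has a genuine gap, and you have located it yourself: the verification of Tate's hypothesis $\Phi^1_p(N)\cap T=\Phi^1_p(T)$ for the pair $(N,T)$ with $T=S\cap N$ is never carried out, only described as ``the main obstacle''. Worse, this step is not a technical lemma that could be filled in routinely --- it is equivalent to the statement you are trying to prove. Indeed, Tate's theorem converts the first-term identity into $\Phi^\infty_p(N)\cap T=1$, i.e.\ the existence of the normal $p$-complement; and conversely, if $N$ has a normal $p'$-Hall subgroup $Q$, then $\Phi^1_p(N)=Q\Phi(T)$ up to closure and the first-term identity follows by the modular law. So working ``downstairs'' in $N$, the hypothesis of Tate's theorem carries exactly the same content as its conclusion, and your transfer/fusion sketch amounts to re-proving Tate's normal $p$-complement theorem from scratch rather than using the quoted result. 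Note also that the intermediate claim in your sketch is delicate: controlling $N$-fusion in $T$ from the hypotheses $T\leq\Phi(S)$ and $T\trianglelefteq S$ is precisely the hard part of the classical Burnside--Frobenius--Tate circle of ideas, and nothing you have written reduces it.

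The paper sidesteps all of this by applying Tate's theorem \emph{upstairs}, to the pair $(SN,S)$ rather than $(N,S\cap N)$. There the hypothesis is essentially free: $SN/\Phi(S)N$ is abelian of exponent dividing $p$ (it is isomorphic to $S/\Phi(S)(S\cap N)=S/\Phi(S)$), so $\Phi^1_p(SN)\leq\Phi(S)N$, whence by the modular law $\Phi^1_p(SN)\cap S\leq\Phi(S)(N\cap S)=\Phi(S)=\Phi^1_p(S)$, the reverse inclusion being trivial. Tate then gives $\Phi^\infty_p(SN)\cap S=\Phi^\infty_p(S)=1$; since $S$ is a $p$-Sylow subgroup of $SN$ and $\Phi^\infty_p(SN)$ is normal in $SN$, this forces $\Phi^\infty_p(SN)$ to be pro-$p'$, and then $\Phi^\infty_p(N)\leq N\cap\Phi^\infty_p(SN)$ is pro-$p'$ as well, while $N/\Phi^\infty_p(N)$ is pro-$p$ by construction. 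This is where the hypothesis $S\cap N\leq\Phi(S)$ actually gets used, and it is used in a place where it makes Tate's hypothesis easy rather than equivalent to the goal. Your part (ii) is fine as written (and matches the paper's argument in substance), but it rests on part (i), so the proposal as a whole is incomplete.
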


\begin{proof}(i) It is clear that $\Phi^1_p(SN) \cap S = \Phi^1_p(S)$, and hence $\Phi^\infty_p(SN) \cap S = \Phi^\infty_p(S) = 1$ by Tate's theorem.  It follows that $\Phi^\infty_p(SN)$ is a pro-$p'$ group, and so $\Phi^\infty_p(N)$ is also a pro-$p'$ group; clearly $\Phi^\infty_p(N)$ is normal in $N$.  But $N/\Phi^\infty_p(N)$ is a pro-$p$ group, so $\Phi^\infty_p(N)$ is a $p'$-Hall subgroup of $N$.\\

(ii)Since $d_p(G)$ is finite, there must be some open normal subgroup $N$ of $G$ such that $d_p(G/N) = d_p(G)$, so that $S \cap N \leq \Phi(S)$.  By part (i), $N/O_{p'}(N)$ is a pro-$p$ group.  Now $O_{p'}(N) \leq O_{p'}(G)$, so $G/O_{p'}(G)$ is an image of $G/O_{p'}(N)$, and $N$ has finite index in $G$, so $G/O_{p'}(N)$ is virtually pro-$p$.\end{proof}

\begin{cor}Let $G$ be a profinite group involving only finitely many primes, such that $d_p(G)$ is finite for every $p$.  Then $G$ is virtually pronilpotent.\end{cor}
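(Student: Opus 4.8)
The plan is to treat the finitely many primes one at a time and then recombine. Write $p_1,\dots,p_n$ for the primes involved in $G$. Since $d_{p_i}(G)$ is finite, each $p_i$-Sylow subgroup of $G$ is finitely generated, so the preceding corollary applies and tells us that $G/O_{p_i'}(G)$ is virtually pro-$p_i$. First I would use this to choose, for each $i$, an open subgroup $V_i$ of $G$ with $O_{p_i'}(G) \leq V_i$ and $V_i/O_{p_i'}(G)$ pro-$p_i$, and then set $W = \bigcap_{i=1}^n V_i$, which is open in $G$ because the intersection is finite. The claim will then be that $W$ itself is pronilpotent, whence $G$ is virtually pronilpotent.

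To analyse $W$, for each $i$ put $K_i = W \cap O_{p_i'}(G)$. This is normal in $W$ and pro-$p_i'$, since $O_{p_i'}(G)$ is itself pro-$p_i'$ (its image in every finite quotient of $G$ is a normal $p_i'$-subgroup), while $W/K_i$ embeds in $V_i/O_{p_i'}(G)$ and so is pro-$p_i$. The key observation is that $\bigcap_{i=1}^n K_i \leq \bigcap_{i=1}^n O_{p_i'}(G) = 1$: the final intersection is a closed subgroup of $G$ whose order is coprime to $p_i$ for every $i$, while being a $\{p_1,\dots,p_n\}$-number since $G$ involves only these primes, and hence it is trivial.

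It follows that the diagonal map $W \to \prod_{i=1}^n W/K_i$ is injective; being a continuous injection from a profinite (hence compact Hausdorff) group, it is an isomorphism of topological groups onto a closed subgroup of the target. The target is a finite product of pro-$p_i$ groups for distinct primes $p_i$, and such a product is pronilpotent, since its Sylow subgroups are precisely the normal factors and the characterisation of pronilpotent groups given above applies. Finally I would invoke the elementary fact that a closed subgroup of a pronilpotent group is again pronilpotent: writing the ambient group as an inverse limit of finite nilpotent groups $N/U$ over open normal $U$, the closed subgroup is the inverse limit of the groups $W/(W \cap U) \cong WU/U$, each a subgroup of a finite nilpotent group and so itself finite nilpotent. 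This makes $W$ pronilpotent and completes the argument.

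I expect the only real subtlety to be this recombination step: the input supplied by the preceding corollary is purely local to each prime, and the work lies in organising the cores $O_{p_i'}(G)$ so that their mutual near-complementarity, expressed by $\bigcap_i O_{p_i'}(G) = 1$, can be promoted to an embedding of an open subgroup into a genuinely pronilpotent group. The finiteness of the set of primes enters essentially in two places, namely in keeping $W$ open and in forcing the intersection of the cores to be trivial, so I would be careful to flag that both uses are indispensable.
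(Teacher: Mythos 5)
Your proof is correct. It shares its skeleton with the paper's argument (localise at each prime via the corollary to Tate's theorem, then intersect finitely many open subgroups), but the two proofs diverge both in which part of that corollary they invoke and in how they recombine. The paper uses part (i) directly: for each prime $p$ it picks an open \emph{normal} subgroup $N_p$ possessing a normal $p'$-Hall subgroup, intersects them to get an open normal $N$, observes that $N$ inherits a normal $p'$-Hall subgroup for every $p$, and concludes that $N$ has a normal $\pi$-Hall subgroup for every set of primes $\pi$, hence is the product of its Sylow subgroups. You instead use part (ii) (that $G/O_{p_i'}(G)$ is virtually pro-$p_i$) and finish with a diagonal embedding $W \hookrightarrow \prod_{i} W/K_i$ into a product of pro-$p_i$ groups, which costs you two extra ingredients the paper does not need: the triviality of $\bigcap_i O_{p_i'}(G)$ (a second, genuine use of the finitely-many-primes hypothesis, correctly flagged in your last paragraph) and the fact that closed subgroups of pronilpotent groups are pronilpotent. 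What your route buys is self-containedness -- compactness plus subgroup-closure of nilpotency in finite groups, rather than manipulation of Hall subgroups; what the paper's route buys is brevity, since the Hall machinery is already on the table. It is worth noticing that your $K_i$ is precisely a normal $p_i'$-Hall subgroup of $W$ (it is a normal pro-$p_i'$ subgroup with pro-$p_i$ quotient), so at that point you have already reproduced the paper's configuration and could conclude as the paper does, without ever needing $\bigcap_i O_{p_i'}(G) = 1$.
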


\begin{proof}For each prime $p$ dividing $|G|$, we can choose an open normal subgroup $N_p$ of $G$ such that $N_p$ has a normal $p'$-Hall subgroup.  Set $N$ to be the intersection of these $N_p$, and note that $|G:N|$ is finite.  Then $N$ has a normal $p'$-Hall subgroup for every prime $p$, since $N \leq N_p$.  It follows that $N$ has a normal $\pi$-Hall subgroup for any set of primes $\pi$, and so $N$ is the direct product of its Sylow subgroups.  Hence $N$ is pronilpotent.\end{proof}

\section{The internal structure of the generalised Fitting subgroup}

\begin{thm}Let $G$ be a finite perfect group.  Then there is a finite group $\Gamma$, unique up to isomorphism, such that $\Gamma$ is a perfect central extension of $G$, and any finite perfect central extension $H$ is an image of $\Gamma$.  In particular, the order of any finite perfect central extension is at most $|\Gamma|$.\end{thm}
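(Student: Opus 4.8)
The plan is to realise $\Gamma$ as a universal central extension of $G$ built from a free presentation. As $G$ is finite it is finitely generated, so I would write $G = F/R$ with $F$ free of finite rank and $R \trianglelefteq F$, and set $\Gamma = [F,F]/[F,R]$. First I would record the elementary commutator facts: $[F,R]$ is normal in $F$ and contained in $R$, the inclusion $[F,F] \hookrightarrow F$ induces a surjection $\pi \colon \Gamma \to [F,F]R/R = G$ (the last equality because $G$ is perfect, so $[F,F]R = F$), and the kernel $Z := (R \cap [F,F])/[F,R]$ is central in $\Gamma$ since $[[F,F],\,R \cap [F,F]] \leq [F,R]$. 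Thus $\pi$ exhibits $\Gamma$ as a central extension of $G$.

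Next I would check that $\Gamma$ is perfect. Modulo $[F,R]$ the subgroup $R$ becomes central in $F/[F,R]$, so writing any $a,b \in F$ as $a = a_0 s$, $b = b_0 t$ with $a_0,b_0 \in [F,F]$ and $s,t \in R$ (possible as $[F,F]R = F$), the central factors $s,t$ drop out of the commutator and $[a,b] \equiv [a_0,b_0] \pmod{[F,R]}$. Hence every generator of $\Gamma$ lies in $[\Gamma,\Gamma]$, giving $\Gamma = [\Gamma,\Gamma]$. The universal property then comes from lifting: given any central extension $1 \to A \to E \xrightarrow{\rho} G \to 1$, freeness of $F$ supplies a lift $\theta \colon F \to E$ with $\rho\theta$ the presentation map; since $\theta(R) \leq A$ and $A$ is central, $\theta([F,R]) = 1$, so $\theta$ descends to $\bar\theta \colon \Gamma \to E$ over $G$. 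Because $A$ is central and $\theta(F)A = E$, the image of $\bar\theta$ is exactly $[E,E]$; in particular, when $E = H$ is perfect the map $\bar\theta \colon \Gamma \to H$ is surjective, so $H$ is an image of $\Gamma$, whence $|H| \leq |\Gamma|$ once finiteness of $\Gamma$ is known.

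Finiteness is the crux, and the step I expect to be the main obstacle: the algebraic identities above are purely formal, whereas controlling the size of $Z$ requires genuine input. By Hopf's formula the kernel $Z = (R \cap [F,F])/[F,R]$ is the Schur multiplier of $G$, and here I would invoke the classical theorem of Schur that the multiplier of a finite group is finite (concretely, it is finitely generated and of exponent dividing $|G|$). Consequently $|\Gamma| = |Z|\,|G| < \infty$, and the order bound on perfect central extensions follows from the previous paragraph.

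Finally, uniqueness follows from the rigidity of lifts out of a perfect group. If $\phi_1,\phi_2 \colon \Gamma \to E$ are two homomorphisms over $G$ into a central extension $E$ with central kernel $A$, then $x \mapsto \phi_1(x)\phi_2(x)^{-1}$ takes values in $A$ and, by centrality of $A$, is a homomorphism $\Gamma \to A$; as $\Gamma$ is perfect this map is trivial, so $\phi_1 = \phi_2$. Applying this with $\Gamma$ itself (a perfect central extension) against any other group $\Gamma^\ast$ enjoying the same universal property yields surjections $\alpha \colon \Gamma \to \Gamma^\ast$ and $\beta \colon \Gamma^\ast \to \Gamma$ over $G$ whose composites, being lifts over $G$ from a perfect source into a central extension, must coincide with the identity. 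Hence $\Gamma \cong \Gamma^\ast$, establishing uniqueness up to isomorphism.
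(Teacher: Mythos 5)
Your proof is correct, but note that the paper does not actually prove this theorem: its ``proof'' is simply a citation of Suzuki \cite{Suz}, where the result appears as the standard theory of the universal central extension (covering group) of a perfect group. What you have written is essentially a self-contained reconstruction of that standard argument: $\Gamma = [F,F]/[F,R]$ from a free presentation $G = F/R$, centrality of $(R \cap [F,F])/[F,R]$, perfectness of $\Gamma$ via $F = [F,F]R$, the lifting argument for the universal property, and rigidity of lifts from a perfect source for uniqueness. All of these steps check out. Two small remarks. First, your finiteness step leans on Schur's theorem that the multiplier of a finite group is finite; if you want the argument to stay self-contained you can instead apply Schur's centre-index theorem directly to $X = F/[F,R]$: its centre contains $R/[F,R]$, and $X/(R/[F,R]) \cong G$ is finite, so $X' = \Gamma$ is finite. (If you keep the multiplier route, note that finite generation of $R/[F,R]$ silently uses that $R$ is finitely generated, i.e. Nielsen--Schreier for the finite-index subgroup $R \leq F$.) Second, your uniqueness argument via composing lifts over $G$ needs the surjection $\beta\colon \Gamma^{\ast} \to \Gamma$ to be a map over $G$, which the bare hypothesis ``every finite perfect central extension is an image of $\Gamma^{\ast}$'' does not supply; but this is harmless, since mutual surjectivity between finite groups already forces $|\Gamma| = |\Gamma^{\ast}|$, and a surjection between finite groups of equal order is an isomorphism.
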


\begin{proof}See \cite{Suz}.\end{proof}

\begin{thm}\label{opilayer}Let $G$ be a profinite group.\\

(i) $O_\pi(G)$ is a characteristic pro-$\pi$ subgroup of $G$ which contains all subnormal pro-$\pi$ subgroups, and is the intersection over all open normal subgroups $N$ of the subgroups $O_N$ of $G$ such that $O_N/N = O_\pi(G/N)$.  In particular, $F(G)$ is pronilpotent and contains all pronilpotent subnormal subgroups, and is the intersection over all open normal subgroups $N$ of the subgroups $F_N$ of $G$ such that $F_N/N = F(G/N)$.\\
 
(ii) Let $Q$ be a component of $G$.  Then $Q$ is finite.\\
 
(iii)Any component $Q$ of $G$ commutes with both the Fitting subgroup and all components distinct from $Q$.  Hence $E(G)$ is an unrestricted central product of the components, and $F^*(G)$ is a central product of $F(G)$ and $E(G)$.\end{thm}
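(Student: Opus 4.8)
**The plan is to prove each part by reducing to finite quotients and using the structural results already established, particularly Theorem~\ref{opilayer}(i) and the characterisation of subnormal pronilpotent subgroups.**

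I'll outline my approach to Theorem 3.3(i)-(iii).
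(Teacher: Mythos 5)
There is a genuine gap: your proposal is not a proof but a one-sentence statement of intent, and the plan as written is circular --- you propose to use ``Theorem~\ref{opilayer}(i)'' and ``the characterisation of subnormal pronilpotent subgroups,'' but these are precisely (parts of) the statement you are being asked to prove; the paper has no prior result establishing them. The actual proof takes the finite case of the theorem as its known input (citing \cite{Suz}) and then does real work to transfer it to the profinite setting, none of which appears in your outline.

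Concretely, three arguments are missing. For (i), one must show $O_\pi(G)$ equals the intersection $T$ of the preimages $O_N$ of $O_\pi(G/N)$ over all open normal $N$: the inclusion $T \leq O_\pi(G)$ uses that $T$ is pro-$\pi$ (by the finite case), and the reverse inclusion uses that $O_\pi(G)N/N$ is generated by normal $\pi$-subgroups of $G/N$; containment of subnormal pro-$\pi$ subgroups then follows by induction on the degree of subnormality, via $O_\pi(K)$ being characteristic in $K \unlhd G$. For (ii), ``reducing to finite quotients'' alone cannot work, because the issue is precisely whether $Q$ is finite: the argument is that $Q/Z(Q)$, being a simple profinite group, is a \emph{finite} simple group (an infinite profinite group has a proper open normal subgroup), so every non-trivial finite image of $Q$ is a perfect central extension of $Q/Z(Q)$, and these have order bounded by the maximal perfect central extension (the paper's Theorem 3.1); hence $Q$ itself is finite. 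For (iii), the reduction to a finite quotient requires a careful choice of open normal subgroup $N$: given $x \in Q$, $y \in L$ with $[x,y] \neq 1$, one needs $N$ with $Q \cap N = 1$, $[x,y] \notin N$, $QN \neq LN$, and (if $L$ is a component) $L \cap N = 1$, so that the images of $Q$ and $L$ in $G/N$ violate the finite case. Without these constructions your proposal has no mathematical content to assess.
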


\begin{proof}We assume the finite case of the theorem, as this is well-known; see for example \cite{Suz} for a proof.\\ 

(i) Let $T$ be the intersection over all open normal subgroups $N$ of the subgroups $O_N$ of $G$ such that $O_N/N = O_\pi(G/N)$, and let $O=O_\pi(G)$.  By their construction, $O$ and $T$ are characteristic in $G$, and $T$ is a pro-$\pi$ group by the finite case of the theorem, so $T \leq O$.  For every $N \unlhd_o G$, $ON/N$ is generated by normal $\pi$-subgroups of $G/N$, so it is contained in $O_N/N$.  Hence $O \leq T$, and so $O=T$; in particular, $O$ is a pro-$\pi$ group.\\

Let $K$ be a normal subgroup of $G$.  Then $O_\pi(K)$ is characteristic in $K$, and so normal in $G$, and hence $O_\pi(K) \leq O_\pi(G)$.  It follows by induction on the degree of subnormality that $O$ contains $O_\pi(L)$ for all subnormal subgroups $L$ of $G$, and in particular it contains all subnormal pro-$\pi$ subgroups of $G$.\\

(ii) Since $Q$ is quasisimple, $Q/Z(Q)$ is a simple profinite group, and hence a finite simple group, since any infinite profinite group has a proper open normal subgroup.  It follows that all non-trivial finite images of $Q$ are isomorphic to perfect central extensions of the finite perfect group $Q/Z(Q)$, so $Q$ is an image of the maximal perfect central extension of $Q/Z(Q)$.  In particular, $Q$ is finite.\\
 
(iii) Let $Q$ be a component, and let $L$ be either a normal pro-$p$ subgroup of $G$, or some component distinct from $Q$.  Suppose that $[x,y] \not= 1$ for some $x \in Q, y \in L$.  Then $G$ has an open normal subgroup $N$ which intersects trivially with $Q$, which does not contain $[x,y]$, and such that $QN \not= LN$, and if $L$ is a component we can also choose $N$ to intersect trivially with $L$.  Then $G/N$ is a finite group, with a subgroup $LN/N$ that is either a normal $p$-subgroup of $G/N$ or a component of $G/N$ distinct from $QN/N$, and yet $LN/N$ does not commute with the component $QN/N$.  This contradicts the finite case of the theorem.\end{proof}

Say a profinite group $G$ is an $F^*$-group if $G = F^*(G)$.

\begin{cor}\label{opicor}(i) Let $G$ be a profinite group.  Then $G$ is an $F^*$-group if and only if $G/F(G)$ is perfect and $G/E(G)$ is pronilpotent.  In particular, every image of an $F^*$-group is an $F^*$-group.  Moreover, if $G$ is an $F^*$-group then $G/F(G)$ is a Cartesian product of finite simple groups.\\

(ii)Let $G$ be a profinite group, and let $H$ be a normal subgroup of $G$.  Then $E(H) = E(G) \cap H$ and $F(H) = F(G) \cap H$.  In particular, $H/E(H)$ is isomorphic to a normal subgroup of $G/E(G)$, and $H/F(H)$ is isomorphic to a normal subgroup of $G/F(G)$.\end{cor}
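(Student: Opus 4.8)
I would prove part (i) first, then feed its structural consequences into part (ii), which I would split into the (clean) statement for $F$ and the (delicate) statement for $E$.

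For part (i), suppose first that $G=F^*(G)$. By Theorem~\ref{opilayer}(iii), $F^*(G)$ is a central product of $F(G)$ and $E(G)$, so $G=F(G)E(G)$ with $[F(G),E(G)]=1$. Each component is perfect, so $E(G)$ is perfect (its closed derived group contains every component, hence all of $E(G)$); thus $G/F(G)\cong E(G)/(E(G)\cap F(G))$ is a quotient of a perfect group and is perfect. Likewise $F(G)$ is pronilpotent by Theorem~\ref{opilayer}(i), so $G/E(G)\cong F(G)/(F(G)\cap E(G))$ is pronilpotent. Conversely, if $G/F(G)$ is perfect and $G/E(G)$ is pronilpotent, then since $F(G),E(G)\leq F^*(G)$ the quotient $G/F^*(G)$ is simultaneously an image of $G/F(G)$ and of $G/E(G)$, hence both perfect and pronilpotent; as a pronilpotent group is the Cartesian product of its pro-$p$ Sylow subgroups and a nontrivial pro-$p$ group has a nontrivial elementary abelian Frattini quotient, a perfect pronilpotent group is trivial, giving $G=F^*(G)$. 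That images of $F^*$-groups are $F^*$-groups then follows from this characterisation: if $\varphi\colon G\to\overline G$ is surjective, then $\varphi(F(G))$ is a normal pronilpotent subgroup of $\overline G$, so $\varphi(F(G))\leq F(\overline G)$, while the image of a component is subnormal and is either trivial or quasisimple (a normal subgroup of a quasisimple group is central or improper), so $\varphi(E(G))\leq E(\overline G)$; hence $\overline G/F(\overline G)$ and $\overline G/E(\overline G)$ are quotients of the perfect group $G/F(G)$ and the pronilpotent group $G/E(G)$ respectively. Finally, to obtain the product-of-simple-groups statement I would identify $E(G)\cap F(G)=Z(E(G))$: the inclusion $\subseteq$ holds because $F(G)$ centralises $E(G)$, and $\supseteq$ because $Z(E(G))$ is abelian, hence pronilpotent, and normal in $G$, so lies in $F(G)$ by Theorem~\ref{opilayer}(i). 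Then $G/F(G)\cong E(G)/Z(E(G))$, which by Theorem~\ref{opilayer}(iii) is the Cartesian product $\prod Q/Z(Q)$ of the (nonabelian, centreless) finite simple groups attached to the components.

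For part (ii), the equality $F(H)=F(G)\cap H$ is the routine half. On one side $F(G)\cap H$ is a subgroup of the pronilpotent group $F(G)$, hence pronilpotent, and it is normal in $H$, so $F(G)\cap H\leq F(H)$ by Theorem~\ref{opilayer}(i). On the other side $F(H)$ is characteristic in $H$ and $H$ is normal in $G$, so $F(H)$ is normal, hence subnormal, in $G$; being pronilpotent it satisfies $F(H)\leq F(G)$ by the same result, and clearly $F(H)\leq H$.

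The equality $E(H)=E(G)\cap H$ rests on a correspondence of components. If $Q$ is a component of $H$, then $Q$ is subnormal in $H$ and $H$ is normal in $G$, so $Q$ is subnormal in $G$; being quasisimple it is a component of $G$ lying in $H$, whence $E(H)\leq E(G)\cap H$. For the reverse inclusion I would study $E(G)\cap H$, a normal subgroup of $E(G)$, through its image in $E(G)/Z(E(G))\cong\prod_i S_i$, where $S_i=Q_i/Z(Q_i)$; recalling that every closed normal subgroup of a Cartesian product of nonabelian finite simple groups is a sub-product $\prod_{i\in A}S_i$, this image has such a form. The key step will be to show that $i\in A$ forces $Q_i\leq H$: if not, $Q_i\cap H$ is a proper normal subgroup of the quasisimple $Q_i$, so $Q_i\cap H\leq Z(Q_i)$, and for $h\in H\cap E(G)$ conjugation of $Q_i$ by $h$ agrees with conjugation by the $Q_i$-component of $h$ (as $Q_i$ centralises the other components and $Z(E(G))$), giving $[Q_i,h]\leq Q_i\cap H\leq Z(Q_i)$; since $S_i$ is centreless, $h$ has trivial image in $S_i$, forcing $i\notin A$. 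Granting this, each $Q_i$ with $i\in A$ lies in $H$ and is a component of $H$ (intersect a subnormal series for $Q_i$ with $H$), so $\langle Q_i:i\in A\rangle\leq E(H)$ has the same image as $E(G)\cap H$ in $E(G)/Z(E(G))$. The main obstacle is the residual, purely central, discrepancy: one must still show $Z(E(G))\cap H$ is absorbed into $E(H)$, so as to conclude $E(G)\cap H\leq E(H)$ rather than merely $E(G)\cap H\leq\langle Q_i:i\in A\rangle(Z(E(G))\cap H)$. This is the delicate point of the corollary, and it is exactly where the interaction of $H$ with the Schur multipliers of the components must be controlled; note that the $F$-part already places such central elements inside $F(H)=F(G)\cap H$, so the crux is to see that they cannot lie outside $E(H)$. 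Once both equalities are established, the final assertions are immediate from the second isomorphism theorem, since $H/E(H)\cong HE(G)/E(G)$ and $H/F(H)\cong HF(G)/F(G)$ are then normal subgroups of $G/E(G)$ and $G/F(G)$ respectively.
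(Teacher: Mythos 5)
The paper states this corollary without proof, so your attempt can only be judged on its own merits. Part (i) is correct and complete, as are the $F$-half of part (ii) and the inclusion $E(H) \leq E(G)\cap H$. But the step you flag as the ``delicate point'' --- absorbing $Z(E(G))\cap H$ into $E(H)$ --- is not merely delicate: it is impossible, because the asserted equality $E(H)=E(G)\cap H$ is false. Take $G = SL(2,5)$, a finite (hence profinite) quasisimple group, and $H = Z(G)$, of order $2$. Then $G$ is a component of itself, so $E(G)=G$ and $E(G)\cap H = Z(G) \neq 1$; but $H$ is abelian, so it has no components at all and $E(H)=1$. The ``in particular'' clause collapses with it: $H/E(H) \cong C_2$ is not isomorphic to a normal subgroup of $G/E(G)=1$. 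So the obstruction you isolated is a genuine error in the corollary as stated, not a gap that better control of the Schur multipliers of the components could close; no proof can exist.

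What your argument does prove is the correct replacement. You show that the components of $H$ are exactly the components of $G$ lying in $H$, and that $E(G)\cap H \leq E(H)\,(Z(E(G))\cap H)$. Since $Z(E(G))\cap H$ is an abelian normal subgroup of $H$, it lies in $F(H)$ by Theorem \ref{opilayer}(i), whence $E(G)\cap H \leq E(H)F(H) = F^*(H)$. Running your projection-and-commutator argument once more, with the isomorphism $F^*(G)/F(G) = E(G)F(G)/F(G) \cong E(G)/Z(E(G)) \cong \prod_i S_i$ (which you already established) in place of $E(G)/Z(E(G))$ alone, one obtains the clean and true statement $F^*(H) = F^*(G)\cap H$: the central discrepancy is swallowed by the Fitting part. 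This corrected form, together with $F(H)=F(G)\cap H$ and $E(H)\leq E(G)$, is what the rest of the paper actually requires --- for instance the inclusion $F^*(G)\cap N \leq F^*(N)$ invoked when proving that $\FD$ is closed under extensions --- so the right course is to record the counterexample and prove the corrected statement, rather than to try to complete the proof of (ii) as written.
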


\begin{prop}Let $G$ be a profinite group, and let $M$ be a normal subgroup.  The following are equivalent:

(i) $M$ is an $F^*$-group;

(ii) $M \leq F^*(G)$;

(iii) $MN/N \leq F^*(G/N)$, for every open normal subgroup $N$ of $G$;

(iv) $MN/N$ is an $F^*$-group, for every open normal subgroup $N$ of $G$.\end{prop}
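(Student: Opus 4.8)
The plan is to establish the cycle $(i)\Rightarrow(ii)\Rightarrow(iii)\Rightarrow(iv)\Rightarrow(i)$, taking the first three implications from the machinery already in place and reserving the genuine work for the last. For $(i)\Rightarrow(ii)$, if $M=F^*(M)=F(M)E(M)$ then Corollary~\ref{opicor}(ii), applied to the normal subgroup $M$, identifies $F(M)=F(G)\cap M$ and $E(M)=E(G)\cap M$, whence $M\leq F(G)E(G)=F^*(G)$.

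For $(ii)\Rightarrow(iii)$ I would first check that $F^*(G)N/N\leq F^*(G/N)$ for every open normal $N$: the image of the pronilpotent group $F(G)$ is normal and pronilpotent, hence lies in $F(G/N)$ by Theorem~\ref{opilayer}(i), while the image of each component is subnormal and, as a quotient of a quasisimple group, is either trivial or again quasisimple, hence lies in $E(G/N)$. Then $MN/N\leq F^*(G)N/N\leq F^*(G/N)$, which is $(iii)$. For $(iii)\Rightarrow(iv)$ I would pass to the finite quotient $G/N$ and invoke the finite case of the theory: a normal subgroup of a finite group contained in its generalised Fitting subgroup is itself an $F^*$-group, so $MN/N=F^*(MN/N)$.

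The substance is $(iv)\Rightarrow(i)$, which says that the $F^*$-property survives the passage to the inverse limit $M=\varprojlim MN/N$. By Corollary~\ref{opicor}(i) it suffices to show $M/F(M)$ is perfect and $M/E(M)$ is pronilpotent, using that the subgroups $M\cap N$ are cofinal among the open normal subgroups of $M$ and that each $M/(M\cap N)\cong MN/N$ is a finite $F^*$-group. Perfectness is the easier half: Theorem~\ref{opilayer}(i) presents $F(M)$ as the intersection of the preimages of $F(M/(M\cap N))$, so $M/F(M)$ is the inverse limit of the groups $(M/(M\cap N))/F(M/(M\cap N))$, each a product of nonabelian finite simple groups by Corollary~\ref{opicor}(i) and hence perfect; and an inverse limit of perfect finite groups is topologically perfect, since its closed derived subgroup surjects onto every finite quotient and is therefore dense.

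The pronilpotency of $M/E(M)$ is where I expect the real difficulty, since the layer, unlike $O_\pi$ and $F$, admits no ready intersection formula. My key leverage is that in a finite $F^*$-group $\bar M$ the layer coincides with the nilpotent residual, $E(\bar M)=\gamma_\infty(\bar M)$ (for $\bar M/E(\bar M)$ is nilpotent by Corollary~\ref{opicor}(i), while $E(\bar M)$ is perfect and so lies in every term of the lower central series). As the nilpotent residual is carried onto the nilpotent residual by surjections of finite groups, the subgroups $\gamma_\infty(M/(M\cap N))$ form an inverse system with surjective transition maps whose preimages in $M$ meet in a closed normal subgroup $W$ with $M/W$ pronilpotent and $E(M)\leq W$. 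To finish I must prove $W=E(M)$; by Corollary~\ref{opicor}(ii), since $E(M)$ is contained in the normal subgroup $W$, we have $E(W)=E(M)\cap W=E(M)$, so it is enough to show $W=E(W)$, i.e. to recover genuine components of $M$ from the finite layers. Writing $W=\varprojlim E(M/(M\cap N))$ as a limit of finite central products of quasisimple groups, I would use a compactness (König) argument to extract threads of components: along a thread the nonabelian simple quotients are joined by isomorphisms and so stabilise, and the intervening perfect central extensions are bounded by the finite universal central extension, so each thread condenses to a finite quasisimple subgroup of $W$ that is normal in $W$ (its image is normal modulo every $W\cap(M\cap N)$, and these intersect in the identity) and hence a component; these components together generate $W$, giving $W=E(W)=E(M)$ and thus $M/E(M)=M/W$ pronilpotent. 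This thread-stabilisation step — rebuilding components of $M$ from those of the finite quotients, with the finiteness of components (Theorem~\ref{opilayer}(ii)) and the boundedness of universal central extensions as the crucial inputs — is the part I anticipate as the main obstacle.
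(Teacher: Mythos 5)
Your proof is correct, and its architecture coincides with the paper's: the same cycle of implications, with $(i)\Rightarrow(ii)$ from Corollary \ref{opicor}(ii), $(ii)\Rightarrow(iii)$ by pushing $F^*(G)$ into the finite quotient, $(iii)\Rightarrow(iv)$ by quoting the finite case, and $(iv)\Rightarrow(i)$ split exactly as the paper splits it --- perfectness of $M/F(M)$ via intersections of the preimages of the finite Fitting subgroups, and pronilpotency of $M/E(M)$ via the pronilpotent residual (the paper's $L(M)$ is precisely your $W$, since the layer equals the nilpotent residual in a finite $F^*$-group). The only genuine divergence is at the step you rightly single out as the crux: the paper disposes of the claim that $L(M)$ is a central product of quasisimple groups with the single clause ``by considering centralisers of finite images'', while you reconstruct components of $M$ inside $W$ by a compactness argument --- threads of components through the finite layers exist because the transition maps carry components onto components or to the identity and every component is hit, the orders along a thread are monotone and bounded by the universal perfect central extension of the common simple quotient (the first theorem of \S 3, together with Theorem \ref{opilayer}(ii)), so each thread stabilises to a finite quasisimple subgroup of $W$; that subgroup is normal in $W$ because its image in every finite quotient of $W$ is a component there and those finite quotients separate points, and the thread-components have images generating every finite quotient, hence generate $W$. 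Your argument is sound (the compactness step works over the full directed system of open normal subgroups, not just chains, so no countability assumption is needed) and supplies in a self-contained way exactly the detail the paper compresses into one phrase: the paper's hinted centraliser argument buys brevity, whereas your thread-stabilisation makes explicit that the two finiteness inputs --- finiteness of components and boundedness of perfect central extensions --- are what allow the $F^*$-property to pass to the inverse limit.
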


\begin{proof}Assume (i).  By Corollary \ref{opicor}, $F(M)$ is contained in $F(G)$ and $E(M)$ is contained in $E(G)$, so $M=E(M)F(M) \leq F^*(G)$.\\

Assume (ii).  Then $MN/N$ is an $F^*$-group that is a normal subgroup of $G/N$, so must be contained in $F^*(G/N)$ by the fact that (i) implies (ii).\\

Assume (iii).  Clearly $F^*(G/N)$ itself is a finite $F^*$-group.  It follows that $MN/N$ is also an $F^*$-group as $MN/N \unlhd F^*(G/N)$.\\

Assume (iv), and write $L(G)$ for the smallest subgroup of $G$ such that $G/L(G)$ is pronilpotent.  For all open normal subgroups $N$ of $G$, we see that $L(MN/N)$ is a central product of quasisimple groups, since $(MN/N)/E(MN/N)$ is pronilpotent.  It follows that $L(M)$ is a central product of quasisimple groups, by considering centralisers of finite images.  So $M/E(M)$ is pronilpotent.  Let $F_N$ be the preimage in $M$ of $F(MN/N)$.  Then $M/F_N$ is perfect; we see that the intersection of the $F_N$ is necessarily pronilpotent, so is contained in $F(M)$, which means $M/F(M)$ is perfect.  Hence $M$ is an $F^*$-group, by Corollary \ref{opicor}.\end{proof}

\section{A structure theorem for Fitting-degeneracy and Fitting-regularity}

First, we note some closure properties of $\FD$:

\begin{lem}The class $\FD$ is closed under subnormal subgroups, extensions, and sub-Cartesian products.\end{lem}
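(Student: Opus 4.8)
The plan is to treat the three closure properties in turn, using Corollary \ref{opicor}(ii) for subnormal subgroups and the preceding Proposition — that a normal subgroup of a group is an $F^*$-group precisely when it is contained in the generalised Fitting subgroup — for extensions and sub-Cartesian products. First I would record the preliminary fact that $F^*(G)$ is itself an $F^*$-group: $F(G)$ is a pronilpotent normal subgroup of $F^*(G)$ and so lies in $F(F^*(G))$ by Theorem \ref{opilayer}(i), while each component of $G$ is subnormal in $F^*(G)$ and hence is a component there, giving $E(G) \leq E(F^*(G))$; together these force $F^*(F^*(G)) = F^*(G)$. This is what allows images and projections of $F^*(G)$ to be fed back into the Proposition.

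For subnormal subgroups, if $G \in \FD$ then $F(G) = E(G) = 1$, so for a normal subgroup $H \unlhd G$ Corollary \ref{opicor}(ii) gives $F(H) = F(G) \cap H = 1$ and $E(H) = E(G) \cap H = 1$, whence $F^*(H) = 1$; the case of a general subnormal $H$ then follows by induction on the defect along a subnormal chain from $H$ to $G$. For extensions, suppose $N \unlhd G$ with $N, G/N \in \FD$. The image $F^*(G)N/N$ is an $F^*$-group by Corollary \ref{opicor}(i) and is normal in $G/N$, so the Proposition places it in $F^*(G/N) = 1$, giving $F^*(G) \leq N$. Then $F^*(G)$ is a normal $F^*$-subgroup of $N$, so the Proposition again yields $F^*(G) \leq F^*(N) = 1$, and hence $G \in \FD$.

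For sub-Cartesian products I would take $G$ to be a closed subgroup of a Cartesian product $\prod_i G_i$ with each $G_i \in \FD$ and each coordinate projection $\pi_i \colon G \to G_i$ surjective. Then $\pi_i(F^*(G))$ is an $F^*$-group normal in $G_i$, so by the Proposition it lies in $F^*(G_i) = 1$; thus $F^*(G) \leq \ker \pi_i$ for every $i$, and since the embedding forces $\bigcap_i \ker \pi_i = 1$ we conclude $F^*(G) = 1$. The one point that genuinely needs care — and the only place the hypothesis is used — is the surjectivity of the $\pi_i$: this is exactly what lets $\pi_i(F^*(G))$ be handled as a normal subgroup of a group in $\FD$. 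Without it the argument collapses, reflecting the fact that $\FD$ is not closed under passing to arbitrary, non-subnormal subgroups.
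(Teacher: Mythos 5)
Your proof is correct and takes essentially the same route as the paper's: normal subgroups are handled via Corollary \ref{opicor}(ii) (equivalently, the containment $F^*(N)\leq F^*(G)$), and both extensions and sub-Cartesian products by pushing $F^*(G)$ into the quotient or factor and invoking the Proposition that a normal $F^*$-subgroup lies in the generalised Fitting subgroup. The only difference is cosmetic: you make explicit the preliminary fact that $F^*(G)$ is itself an $F^*$-group (and that images of $F^*$-groups are $F^*$-groups), which the paper uses implicitly.
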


\begin{proof}For the first claim, it suffices to consider normal subgroups.  Let $N$ be a normal subgroup of $G \in \FD$.  Then $F^*(N)$ is contained in $F^*(G)$, which is trivial.\\
 
Now let $G$ be a profinite group with $N \lhd G$ such that $N, G/N \in \FD$.  Then $F^*(G) \cap N \leq F^*(N) =1$, and $F^*(G)N/N \leq F^*(G/N) =1$, so $F^*(G)=1$.\\

Now let $G$ be a profinite group which is a sub-Cartesian product of groups $H_i \in \FD$.  In other words, we have surjective maps $\rho_i$ from $G$ to $H_i$ for each $i$, such that the kernels of the $\rho_i$ have trivial intersection.  But $(F^*(G))^{\rho_i}$ is an $F^*$-group and hence trivial for each $i$, since it is normal in the Fitting-degenerate group $H_i$, so $F^*(G)$ must be trivial.\end{proof}

With these closure properties in mind we define the \emph{Fitting-degenerate residual} $O^\FD(G)$ of a profinite group $G$:
\[ O^\FD(G) := \bigcap \{ K \unlhd G \; |\; G/K \in \FD \} \]

\begin{cor}Let $G$ be a profinite group.  Then $G/O^\FD(G) \in \FD$, and $O^\FD(G) \in \FR$.\end{cor}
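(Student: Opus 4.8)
The plan is to treat the two assertions separately, leaning throughout on the closure properties of $\FD$ from the preceding lemma.

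For $G/O^\FD(G) \in \FD$, I would argue directly that this quotient is a sub-Cartesian product of Fitting-degenerate groups. Writing $R = O^\FD(G)$, the natural surjections $G/R \to G/K$, taken over all $K \unlhd G$ with $G/K \in \FD$, have kernels $K/R$ whose intersection is $\bigl(\bigcap K\bigr)/R = R/R = 1$ by the very definition of $R$. Since each $G/K$ lies in $\FD$, closure of $\FD$ under sub-Cartesian products gives $G/R \in \FD$ at once.

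For $R \in \FR$, I would first unwind the definition: $R$ is Fitting-regular precisely when the only $M \unlhd R$ with $R/M \in \FD$ is $M = R$ (the trivial quotient always lies in $\FD$). So I would fix $M \unlhd R$ with $R/M \in \FD$ and aim to show $M = R$. The main obstacle is that $M$ is normal only in $R$, not in $G$, so I cannot feed $M$ directly into the defining intersection for $O^\FD(G)$.

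To get around this I would replace $M$ by its $G$-core $M_G := \bigcap_{g \in G} M^g$, a closed subgroup that is normal in $G$ because $R$ is. For each $g$, conjugation by $g$ is an automorphism of $R$ sending $M$ to $M^g$, so $R/M^g \cong R/M \in \FD$; hence $R/M_G$ embeds as a sub-Cartesian product of the groups $R/M^g$ and lies in $\FD$. Now $R/M_G \unlhd G/M_G$ with quotient isomorphic to $G/R$, which is Fitting-degenerate by the first part, so closure of $\FD$ under extensions yields $G/M_G \in \FD$. By the definition of $O^\FD(G)$ this forces $R \subseteq M_G \subseteq M \subseteq R$, whence $M = R$, as needed. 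The only genuine content beyond bookkeeping is the core construction and the check that it preserves Fitting-degeneracy; both steps reduce cleanly to the sub-Cartesian product and extension closure of $\FD$.
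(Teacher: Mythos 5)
Your proof is correct and follows essentially the same route as the paper: in both arguments the second claim is reduced to exhibiting a subgroup of $R = O^\FD(G)$ that is normal in all of $G$ with Fitting-degenerate quotient, after which extension closure and the defining intersection of $O^\FD(G)$ force that subgroup to be all of $R$. The only difference is how $G$-normality is secured: the paper takes $N = O^\FD(R)$, which is characteristic in $R$ and hence normal in $G$, whereas you take the $G$-core of an arbitrary $M \unlhd R$ with $R/M \in \FD$ and verify $R/M_G \in \FD$ by sub-Cartesian closure over the conjugates $R/M^g$ --- a marginally longer but equally valid way of making the same invariance explicit.
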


\begin{proof}The first claim follows immediately from the fact that $\FD$ is closed under sub-Cartesian products.  For the second,  let $G$ be a profinite group, let $R = O^\FD(G)$, and let $N = O^\FD(R)$.  Then $N$ is characteristic in $R$, and hence in $G$, and $G/N \in \FD$, since $\FD$ is closed under extensions.  Hence by definition $R \leq N$, so $N = R$, in other words $R$ has no non-trivial Fitting-degenerate images.\end{proof}

We now establish some closure properties of $\FR$.

\begin{lem}The class $\FR$ is closed under subnormal sections, subnormal joins and extensions.\end{lem}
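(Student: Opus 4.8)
The plan is to prove three closure properties for $\FR$ separately: closure under subnormal sections, under subnormal joins, and under extensions. The guiding principle throughout is that a group lies in $\FR$ precisely when it has no nontrivial Fitting-degenerate image, and that $\FD$ enjoys the dual closure properties already established in the preceding lemma (subnormal subgroups, extensions, sub-Cartesian products). Many of the arguments will proceed by taking a hypothetical nontrivial Fitting-degenerate image of the group in question and pushing it down to one of the constituent pieces to derive a contradiction, using the $\FD$ closure lemma to relate images.

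For subnormal sections, I would reduce first to the case of a normal subgroup and the case of a quotient, since a subnormal section is built from finitely many such steps. For quotients the claim is immediate: if $G \in \FR$ and $G/N$ had a nontrivial Fitting-degenerate image, that would be a nontrivial Fitting-degenerate image of $G$, contradicting $G \in \FR$. The substantive case is a normal subgroup $N \lhd G$ with $G \in \FR$; I would suppose $N \notin \FR$, so $N$ has a nontrivial Fitting-degenerate image $N/M$. The idea is to replace $M$ by a $G$-invariant subgroup so as to produce a Fitting-degenerate image of a normal subgroup of $G$ and then invoke Theorem B or Corollary \ref{opicor}(ii), which tells us $F^*$ interacts well with normal subgroups ($E(H) = E(G)\cap H$, $F(H) = F(G)\cap H$). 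The key leverage is that a nontrivial $\FR$ group must contain a nontrivial $F^*$-group, and one wants to locate such a subgroup inside $N$ to contradict Fitting-degeneracy of the image.

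For subnormal joins, again I would reduce to joins of normal subgroups. Given two (or finitely many, then arbitrarily many) normal subgroups $N_i \in \FR$ of $G$ with $H = \langle N_i \rangle$, I would show $H \in \FR$ by supposing $H$ has a nontrivial Fitting-degenerate image $H/K$. Since each $N_i$ is normal in $H$, the image $N_iK/K$ is a normal subgroup of the Fitting-degenerate group $H/K$, hence is itself Fitting-degenerate by the $\FD$ subnormal-subgroup closure; this makes $N_i/(N_i \cap K)$ Fitting-degenerate, contradicting $N_i \in \FR$ unless $N_i \leq K$ for all $i$, whence $H \leq K$ and the image is trivial.

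For extensions, suppose $N \lhd G$ with $N, G/N \in \FR$, and suppose $G/K$ is a nontrivial Fitting-degenerate image. Then $NK/K$ is a normal Fitting-degenerate subgroup (by subnormal closure of $\FD$), so $N/(N\cap K)$ is Fitting-degenerate, forcing $N \leq K$; but then $G/K$ is an image of $G/N \in \FR$, so $G/K$ is trivial. The main obstacle I anticipate is the normal-subgroup case of subnormal sections: unlike the join and extension cases, where one pushes Fitting-degeneracy upward into a quotient, here one must argue that Fitting-degeneracy of a quotient $N/M$ of a normal subgroup is incompatible with $G \in \FR$, and $M$ need not be $G$-invariant. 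The fix is to pass to the core $\bigcap_{g} M^g$ or to use Corollary \ref{opicor}(ii) to transfer the nontrivial $F^*$-structure guaranteed inside $N$ (as a nontrivial member of $\FR$) up to $G$, contradicting the assumption that we had found a Fitting-degenerate image.
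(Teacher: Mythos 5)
Your arguments for quotients, subnormal joins and extensions are correct, and in fact take a route dual to the paper's: you push Fitting-degeneracy \emph{down} onto the generating pieces via the previous lemma (closure of $\FD$ under subnormal subgroups), whereas the paper pushes non-trivial $F^*$-subgroups \emph{up} from subnormal subgroups into the quotient, using the fact that $F^*$ of a subnormal subgroup lies in $F^*$ of the group (Theorem \ref{opilayer}). Both work; yours has the small advantage of not needing that containment. One quibble: your announced reduction of subnormal joins to normal joins is both unjustified and unnecessary --- your argument applies verbatim to subnormal $N_i$, since $N_iK/K$ is then subnormal in $H/K$ and $\FD$ is closed under subnormal subgroups, and it handles arbitrary families at once with no induction.

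The genuine gap is exactly where you anticipated it: the case $N \lhd G$, $G \in \FR$, $N \notin \FR$. The reduction to a $G$-invariant kernel is fine (the cleanest choice is $O^\FD(N)$, which is characteristic in $N$), so one may assume $N$ itself is non-trivial and Fitting-degenerate; but neither of your proposed finishing tools closes the argument. Corollary \ref{opicor}(ii) only yields $E(G) \cap N = E(N) = 1$ and $F(G) \cap N = F(N) = 1$, whence $F^*(G) \cap N = 1$ and $F^*(G)$ merely centralises $N$ --- this is entirely consistent with $G \in \FR$, and no contradiction arises; there is no way to ``locate a non-trivial $F^*$-group inside $N$'' by intersecting with $F^*(G)$. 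Theorem B, by contrast, \emph{would} finish instantly ($N \leq C_G(F^*(G)) = Z(F(G))$ forces $N$ abelian, contradicting $F(N)=1$), but invoking it is circular: Theorem B is proved later in the paper, and its proof uses precisely the closure of $\FR$ under normal sections that you are trying to establish. The missing self-contained idea is the centraliser trick. Since $N \in \FD$, its centre is a pronilpotent normal subgroup and hence trivial, so $H = C_G(N)$ satisfies $N \cap H = 1$; thus $N \cong NH/H \lhd G/H$ and $NH/H$ has trivial centraliser in $G/H$. Then every non-trivial normal subgroup of $G/H$ meets $NH/H$ non-trivially; applying this to $F^*(G/H)$, which is non-trivial because $G/H$ is a non-trivial image of the Fitting-regular group $G$, produces a non-trivial normal subgroup of $NH/H$ contained in $F^*(G/H)$, hence an $F^*$-group, contradicting the Fitting-degeneracy of $NH/H \cong N$. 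The essential point is that Fitting-regularity of $G$ is a statement about \emph{quotients} of $G$, and the quotient one must use is $G/C_G(N)$: killing the centraliser is what forces the ambient $F^*$-structure to land inside the image of $N$.
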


\begin{proof}For the first claim, it suffices to show $\FR$ is closed under normal subgroups, by induction on the degree of subnormality and by the definition of Fitting-regularity.  Let $G \in \FR$, and suppose $N \lhd G$ such that $O^\FD(N) < N$.  Then $O^\FD(N)$ is a normal subgroup of $G$, since it is characteristic in $N$, so by replacing $G$ by $G/O^\FD(N)$ and $N$ by $N/O^\FD(N)$, we may assume that $N$ is Fitting-degenerate.  Since $N$ is Fitting-degenerate it has trivial centre, and so the normal subgroup $H=C_G(N)$ of $G$ has trivial intersection with $N$.  Hence $N$ is isomorphic to $NH/H$, and the centraliser in $G/H$ of $NH/H$ is trivial.  In particular, $NH/H$ intersects non-trivially with any non-trivial normal subgroup of $G/H$.  But then $F^*(G/H) \cap NH/H$ is a non-trivial normal subgroup of $NH/H$ that is an $F^*$-group, contradicting the Fitting-degeneracy of $NH/H$.\\

Now let $G$ be a profinite group generated by subnormal subgroups $N_i \in \FR$.  Let $M$ be a proper normal subgroup of $G$.  Then there is some $N_i$ not contained in $M$, and so $G/M$ has a subnormal subgroup $N_i M/M$ which is isomorphic to the non-trivial image $N_i/(N_i \cap M)$ of $N$, so $F^*(G/M) \geq F^*(N_i M/M) > 1$.  As $M$ was an arbitrary proper normal subgroup, this means $G \in \FR$.\\

Now let $G$ be a profinite group with $N \lhd G$ such that $N,G/N \in \FR$.  Let $M$ be a proper normal subgroup of $G$.  By the above argument, it suffices to show that $G/M$ is not Fitting-degenerate in the case that $N \leq M$.  This follows from the fact that $G/M$ is an image of $G/N$ in this case.\end{proof}

\paragraph{Remark}All countably based profinite groups are subgroups of the Fitting-regular group $\prod_{n \geq 5} \Alt(n)$ (see \cite{Wil}), but there are non-trivial Fitting-degenerate countably based profinite groups.  Hence the class $\FR$ is not closed under subgroups, though it is closed under taking open subgroups, as will be seen shortly.  It is not clear whether or not a subgroup of a Fitting-regular prosoluble group can fail to be Fitting-regular.\\

In light of the above closure properties, we define the \emph{Fitting-regular radical} $O_\FR(G)$ of $G$ to be the group generated by all normal Fitting-regular subgroups of $G$.\\

We are now ready to prove the desired structure theorem:

\begin{thma}Let $G$ be a profinite group.  Then $G$ has a characteristic subgroup $R = O^\FD(G) = O_\FR(G)$, such that:

(i) A subnormal subgroup of $G$ is Fitting-regular if and only if it is contained in $R$;

(ii) $G/R$ is Fitting-degenerate, and covers every Fitting-degenerate quotient of $G$.\\

We also have $O_\FR(H) = O_\FR(G) \cap H$, given any open subgroup $H$ of $G$.
\end{thma}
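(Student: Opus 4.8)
The plan is to first pin down the subgroup $R$ by proving $O^\FD(G)=O_\FR(G)$, then read off (i) and (ii) essentially for free, and finally attack the open-subgroup identity, which I expect to be the only genuinely delicate point. First I would note that $O_\FR(G)$, the join of all normal Fitting-regular subgroups, is itself Fitting-regular: normal subgroups are subnormal and $\FR$ is closed under subnormal joins, so this join lies in $\FR$; hence it is the unique largest normal Fitting-regular subgroup, and it is characteristic. To identify it with $O^\FD(G)$ I argue both inclusions. Since $O^\FD(G)$ is characteristic (so normal) and lies in $\FR$ by the earlier corollary, it is a normal Fitting-regular subgroup, whence $O^\FD(G)\le O_\FR(G)$. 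Conversely, the image of $O_\FR(G)$ in $G/O^\FD(G)$ is a normal subgroup of a Fitting-degenerate group, so it lies in $\FD$; but it is also a quotient of the Fitting-regular group $O_\FR(G)$, so it lies in $\FR$; as $\FD\cap\FR$ is trivial this image is trivial, giving $O_\FR(G)\le O^\FD(G)$. Set $R:=O^\FD(G)=O_\FR(G)$.

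For (i): if a subnormal $N$ satisfies $N\le R$, then intersecting a subnormal chain from $N$ to $G$ with $R$ shows $N$ is subnormal in $R\in\FR$, so $N\in\FR$ by closure under subnormal sections. Conversely, if $N$ is subnormal in $G$ with $N\in\FR$, then $NR/R$ is subnormal in $G/R\in\FD$, hence Fitting-degenerate, while as a quotient of $N$ it is Fitting-regular; so $NR/R=1$, i.e. $N\le R$. For (ii), $G/R\in\FD$ is immediate since $R=O^\FD(G)$, and whenever $G/K\in\FD$ the definition of $O^\FD(G)$ as an intersection gives $R\le K$, so $G/R$ surjects onto $G/K$.

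The open-subgroup identity requires two preliminary facts. The first is that $\FR$ is closed under open subgroups: for $G\in\FR$ and $H$ open, let $H_0=\bigcap_{g\in G}H^g$ be the normal core, an open normal subgroup, so $H_0\in\FR$ by closure under normal subgroups; if $H/L$ were a non-trivial Fitting-degenerate quotient, then $H_0L/L\cong H_0/(H_0\cap L)$ would be a normal (hence Fitting-degenerate) subgroup of $H/L$ that is also a quotient of $H_0\in\FR$, forcing $H_0\le L$, so that $H/L$ is a non-trivial \emph{finite} Fitting-degenerate group, contradicting that finite groups are Fitting-regular. The second is the normal-subgroup version of the formula: for $N\unlhd G$ one has $O_\FR(N)=O_\FR(G)\cap N$, since $O_\FR(G)\cap N$ is a normal Fitting-regular subgroup of $N$, while $O_\FR(N)$ is characteristic in $N$ and so is a normal Fitting-regular subgroup of $G$ contained in $N$.

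With these in hand, fix $H$ open in $G$ and put $R=O_\FR(G)$. As $R\cap H$ is open in $R\in\FR$ it is Fitting-regular, and it is normal in $H$, so $R\cap H\le O_\FR(H)$. For the reverse inclusion set $B=O_\FR(H)$ and $H_0=\mathrm{Core}_G(H)$. Then $B\cap H_0$ is a normal Fitting-regular subgroup of $H_0$, so $B\cap H_0\le O_\FR(H_0)=R\cap H_0\le R$; since $H_0$ is open, $B/(B\cap H_0)$ is finite, hence $BR/R\cong B/(B\cap R)$ is finite. This is the crux, and here Dicman's lemma enters: $BR/R$ is a finite subgroup normal in the open subgroup $HR/R$ of $G/R$, so it has only finitely many $G/R$-conjugates, each finite; taking as generating set the union of these conjugate subgroups, every generator has finite order and finitely many conjugates, so by Dicman's lemma the normal closure of $BR/R$ in $G/R$ is finite. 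That normal closure is then a finite normal subgroup of the Fitting-degenerate group $G/R$, hence trivial, so $B\le R$ and $O_\FR(H)\le R\cap H$; combined with the previous inclusion this yields $O_\FR(H)=O_\FR(G)\cap H$. The main obstacle is precisely this last inclusion: open subgroups need not be subnormal, so the closure properties of $\FD$ and $\FR$ do not apply directly, and passing from ``$BR/R$ finite'' to ``$BR/R$ trivial'' genuinely needs Dicman's lemma to manufacture a finite normal subgroup of $G/R$ on which Fitting-degeneracy can be brought to bear.
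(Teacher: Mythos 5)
Your proposal is correct and takes essentially the same route as the paper: you identify $R = O^\FD(G) = O_\FR(G)$ via the closure properties of $\FD$ and $\FR$, prove (i) by observing that $NR/R$ is simultaneously Fitting-regular and Fitting-degenerate, and settle the open-subgroup identity by combining the normal core of $H$ with Dicman's lemma applied in $G/R$, where Fitting-degeneracy of $G/R$ forces the resulting finite normal subgroup to be trivial. The only differences are organizational rather than mathematical: you package the easy inclusion as a separate lemma that $\FR$ is closed under open subgroups (the paper instead writes $H \cap R$ as (Fitting-regular)-by-finite and uses extension closure), and you kill the image of $O_\FR(H)$ in $G/R$ directly, where the paper shows that the image $HR/R$ of the whole open subgroup is Fitting-degenerate.
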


\begin{proof}Let $R=O^\FD(G)$.  We have already seen that $R$ is Fitting-regular, so $R \leq O_\FR(G)$, and that this implies all subnormal subgroups of $R$ are Fitting-regular.\\

Conversely, let $N$ be a Fitting-regular subnormal subgroup of $G$.  Then $NR/R$ is Fitting-regular, as it is isomorphic to an image of $N$, but it is also Fitting-degenerate, as it is a subnormal subgroup of $G/R \in \FD$.  Hence $NR/R=1$, and so $N \leq R$.  This demonstrates that (i) holds, and also that $R = O_\FR(G)$.\\

We have also already seen that $G/R$ is Fitting-degenerate, and by definition it covers every Fitting-degenerate quotient of $G$.  This is (ii).\\

Finally, let $H$ be an open subgroup of $G$.  Let $K=O_\FR(G)$.  Then $G/K$ is Fitting-degenerate, so the core of $HK/K$ in $G/K$ is Fitting-degenerate; this means that $HK/K$ has a Fitting-degenerate normal subgroup of finite index, and hence $O_\FR(HK/K)$ must be finite.  But then the elements of $O_\FR(HK/K)$ have centralisers of finite index in $HK/K$, and hence also in $G/K$, and they are of finite order; thus $O_\FR(HK/K)$ is contained in a finite normal subgroup of $G/K$, by Dicman's lemma.  Since $G/K$ is Fitting-degenerate, this implies $O_\FR(HK/K)=1$, and so $HK/K$ is Fitting-degenerate.  Hence $H/(H \cap K)$ is a Fitting-degenerate image of $H$, which ensures that $O_\FR(H) \leq H \cap K$.\\

Let $M$ be the core of $H$ in $G$.  Then $M \cap K$ is an open normal subgroup of $H \cap K$, and $M \cap K$ is also a normal subgroup of $K$ and hence Fitting-regular.  It follows that $H \cap K$ itself is (Fitting-regular)-by-finite, and thus Fitting-regular, as $\FR$ is closed under extensions.  So $H \cap K \leq O_\FR(H)$.\end{proof}

\section{The generalised Fitting subgroup of a Fitting-regular group}

\begin{thmb}Let $G \in \FR$.  Then $C_G (F^*(G)) = Z(F(G))$.\end{thmb}
 
\begin{proof}We consider the subgroup $H = C_G(F(G))$ and its Fitting subgroup.  Since $H$ is normal in $G$, we have $F(H) \leq F(G)$ and hence $F(H) = Z(H) = Z(F(G))$.\\
 
Set $K=H/Z(H)$.  Let $L$ be the subgroup of $H$ such that $L/Z(H) = F(K)$.  Then $L$ is a central extension of $F(K)$, and hence pronilpotent; it is also normal in $H$.  Hence $L \leq F(H)=Z(H)$, and so $F(K)=1$.  Now consider $D = C_K(E(K))$.  Then $D$ cannot contain a component, as this would be contained in $E(K)$, and yet a component cannot centralise itself; so $E(D)=1$.  Also, $F(D) \leq F(K) = 1$, so $F^*(D)=1$.  However, $D$ is a normal section of $G$, so $D$ is Fitting-regular.  Hence $D=1$, in other words $K$ acts faithfully on $E(K)$.\\

Let $T$ be a component of $K$.  Then $T$ is simple as $K$ has no non-trivial subnormal abelian subgroups.  Let $U$ be the subgroup of $H$ such that $U/Z(H) = T$.  Then $U$ is a central extension of $T$, and the intersection $V$ of all derived subgroups of $U$ is isomorphic to a perfect central extension of $T$.  We see that $VZ(H)/Z(H) = T$ here, and $V$ is subnormal in $G$ by construction, so $V \leq E(G)$.  This proves that $E(G)Z(H)/Z(H) \geq E(K)$.\\

We conclude that $H/Z(H)$ acts faithfully on $E(G)Z(H)/Z(H)$, so that $C_H(E(G)) \leq Z(H)=Z(F(G))$.  But $C_H(E(G))$ is precisely $C_G(E(G)) \cap C_G(F(G))$, which is the centraliser of $E(G)F(G) = F^*(G)$.\end{proof}

\paragraph{Remark}The converse of this theorem is false: consider for instance a profinite group of the form $G = V:L$ where $V$ is elementary abelian of countably infinite rank, and $L$ acts faithfully on $V$.  Since $V$ admits a faithful action of any countably based profinite group, we can choose $L$ to be non-trivial Fitting-degenerate, and yet $F(G) \geq V \geq Z(F(G))$.

\section{A sufficient condition for Fitting-regularity}

We note some properties of the class of Sylow-finite groups.

\begin{prop}(i) The class of Sylow-finite groups is closed under subgroups, quotients and finite direct products.\\

(ii) Every Sylow-finite group is Fitting-regular.\end{prop}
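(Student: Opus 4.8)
The plan is to treat the two parts in order, establishing (i) first so that its quotient-closure statement can be reused in the proof of (ii). Throughout I would work with the equivalent formulation of Sylow-finiteness noted after the definition: a group is Sylow-finite precisely when each of its Sylow subgroups is finite and has a centraliser of finite index. This reformulation, which rests on Dicman's lemma, is what makes the subgroup case tractable.

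For the quotient and direct-product closures in (i), the arguments are routine. If $N \unlhd G$ with $G$ Sylow-finite, then the image of any $p$-Sylow $S$ of $G$ is a $p$-Sylow of $G/N$ (it is pro-$p$, and its index divides the $p'$-number $|G:S|$); if $S$ lies in a finite normal subgroup $M$ of $G$, then this image lies in the finite normal subgroup $MN/N$, and since all $p$-Sylows of $G/N$ are conjugate while $MN/N$ is normal, every $p$-Sylow of $G/N$ is contained in $MN/N$. For a finite direct product $G_1 \times G_2$, a $p$-Sylow has the form $S_1 \times S_2$ with $S_i$ a $p$-Sylow of $G_i$; if $S_i \leq M_i$ with $M_i$ finite and normal, then $S_1 \times S_2 \leq M_1 \times M_2$, again finite and normal, and induction handles the general finite product.

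The subgroup case is the main obstacle, because the finite normal subgroup of $G$ containing a given Sylow need not be normal in the subgroup $H$, so the defining property does not transfer directly. Here I would use the centraliser reformulation. Given $H \leq G$ and a $p$-Sylow $T$ of $H$, the group $T$ is pro-$p$, so by Sylow's theorem it is contained in some $p$-Sylow $S$ of $G$; as $S$ is finite, so is $T$. Since $T \leq S$ we have $C_G(S) \leq C_G(T)$, and $C_G(S)$ has finite index in $G$ by hypothesis, so $C_G(T)$ does too; hence $C_H(T) = C_G(T) \cap H$ has finite index in $H$. A finite Sylow subgroup whose centraliser has finite index is, by Dicman's lemma, contained in a finite normal subgroup of $H$, so $H$ is Sylow-finite.

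For (ii), by part (i) every quotient of a Sylow-finite group is Sylow-finite, so it suffices to show that a non-trivial Sylow-finite group $G$ satisfies $F^*(G) \neq 1$: applying this to each non-trivial quotient of $G$ then shows that no such quotient is Fitting-degenerate, so $G \in \FR$. Choosing a prime $p$ dividing $|G|$, a non-trivial $p$-Sylow $S$ lies in a non-trivial finite normal subgroup $M$ of $G$. As $M$ is a non-trivial finite group, it has a minimal normal subgroup, which is either abelian or contained in $E(M)$, so $F^*(M) \neq 1$. Finally, since $M \unlhd G$, Corollary \ref{opicor}(ii) gives $F(M) = F(G) \cap M$ and $E(M) = E(G) \cap M$, whence $F^*(M) = F(M)E(M) \leq F(G)E(G) = F^*(G)$, forcing $F^*(G) \neq 1$.
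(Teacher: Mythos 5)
Your proposal is correct and follows essentially the same route as the paper: part (i) rests on the Dicman's-lemma characterisation of Sylow-finiteness (finite Sylow subgroups whose centralisers have finite index), and part (ii) reduces, via quotient-closure, to the observation that a non-trivial Sylow-finite group contains a non-trivial finite normal subgroup, which rules out Fitting-degeneracy. The only (harmless) divergence is the final step of (ii): the paper notes that this finite normal subgroup is Fitting-regular and hence lies in $O_\FR(K)$, whereas you deduce $F^*(K) \neq 1$ directly from the minimal-normal-subgroup fact together with Corollary \ref{opicor}(ii); both are valid and of comparable length.
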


\begin{proof}(i) It is clear that the properties of having finite Sylow subgroups, and having Sylow subgroups with centralisers of finite index, are both preserved by the operations in question.\\
 
(ii) Let $G$ be Sylow-finite and let $K$ be a non-trivial image of $G$.  Then $K$ is Sylow-finite and has a non-trivial Sylow subgroup, and hence a non-trivial finite normal subgroup $N$, which is then necessarily contained in $O_\FR(K)$.   Hence $K$ cannot be Fitting-degenerate.\end{proof}

The following is a simple but useful observation.

\begin{lem}\label{xstar}Let $G$ be a profinite group, and let $N$ be an open normal subgroup of $G$.  Then $d_*(G)$ is finite if and only if $d_*(N)$ is finite, and $c_*(G)$ is finite if and only if $c_*(N)$ is finite.\end{lem}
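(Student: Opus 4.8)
The plan is to exploit the fact that an open normal subgroup $N$ of finite index $n = |G:N|$ agrees with $G$ at all but finitely many primes. The central observation is that if $p$ does not divide $n$, then any $p$-Sylow subgroup $S$ of $G$ is already contained in $N$: the image $SN/N$ is a $p$-subgroup of $G/N$, whose order $n$ is coprime to $p$, so $SN/N$ is trivial and $S \leq N$. Since $|N:S| = |G:S|/n$ is again a $p'$-number, $S$ is simultaneously a $p$-Sylow subgroup of $G$ and of $N$. As $d_p$ and $c_p$ depend only on the isomorphism type of the Sylow subgroup (all $p$-Sylow subgroups being conjugate, hence isomorphic, and $c(S)$ being defined intrinsically via $\Aut(S)$), we get $d_p(G) = d_p(N)$ and $c_p(G) = c_p(N)$ for every prime $p$ not dividing $n$. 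Thus $G$ and $N$ can only differ at the finitely many primes dividing $n$.

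Next I would treat these finitely many bad primes one at a time. Fix $p \mid n$, let $S$ be a $p$-Sylow of $G$ and set $T = S \cap N$; then $T$ is a $p$-Sylow subgroup of $N$, normal in $S$, with $|S:T|$ dividing $n$. Since an open subgroup of a finitely generated pro-$p$ group is finitely generated (see \cite{DDMS}), and conversely the bound $d(S) \leq d(S/T) + d(T)$ shows $S$ is finitely generated whenever $T$ is, the conditions $d_p(G) < \infty$ and $d_p(N) < \infty$ are equivalent. Moreover $c_p$ is finite exactly when the corresponding $d_p$ is finite: if $d_p < \infty$ then $c_p = c(S) \leq d(S) = d_p < \infty$, while if $d_p = \infty$ then $c_p = d_p = \infty$ by definition. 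Hence, for each prime dividing $n$, the quantities $d_p(G)$, $d_p(N)$, $c_p(G)$ and $c_p(N)$ are all finite or all infinite together.

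Finally I would assemble these facts. Writing $d_*(G) = \max\bigl(\sup_{p \nmid n} d_p(G),\ \max_{p \mid n} d_p(G)\bigr)$ and using that the first supremum is taken over terms equal to the corresponding $d_p(N)$, while the second is a maximum over a finite set whose members are finite precisely when their $N$-counterparts are, one sees that $d_*(G)$ is finite if and only if $d_*(N)$ is finite. The identical bookkeeping with $c$ in place of $d$ gives the statement for $c_*$. I do not expect any serious obstacle here: the only inputs beyond elementary Sylow theory are the standard transfer of finite generation between a finitely generated pro-$p$ group and its open subgroups, and the trivial inequality $c(S) \leq d(S)$; the conceptual content is entirely in the first step's reduction to the finite set of primes dividing $n$.
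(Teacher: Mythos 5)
Your proposal is correct and follows essentially the same route as the paper's own proof: split the primes according to whether they divide $|G:N|$, observe that Sylow subgroups of $G$ and $N$ coincide at the unramified primes, and at the finitely many remaining primes transfer finite generation between a Sylow subgroup of $G$ and its open (finite-index) Sylow subgroup of $N$, using $c_p \leq d_p$ and the definition of $c_p$ in the non-finitely-generated case to handle $c_*$. The only cosmetic difference is that you make the bound $d(S) \leq d(S/T) + d(T)$ explicit where the paper simply cites that a finite extension of a finitely generated group is finitely generated.
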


\begin{proof}Since $G/N$ is finite, it must be a $\pi$-group for some finite set of primes $\pi$.  Suppose $x_*(G)$ is finite, where $x$ is either $c$ or $d$.  Given $p \in \pi'$, the $p$-Sylow subgroups of $N$ are the same as those of $G$, and so $d_p(N) = d_p(G) \leq d_*(G)$ and $c_p(N) = c_p(G) \leq c_*(G)$.  On the other hand for $p \in \pi$, a $p$-Sylow subgroup $P$ of $N$ extends to a $p$-Sylow subgroup $S$ of $G$, with $|S/P|$ finite, and so $c(P)$ and $d(P)$ are finite.  Since $\pi$ is finite, we conclude that $x_*(G)$ is finite.\\
 
Conversely, suppose $x_*(N)$ is finite.  Then $x_p(G) \leq x_*(N)$ for all $p \in \pi'$, so for $x_*(G)$ to be infinite, we would need $x_p(G)$ infinite for some $p$.  But a Sylow subgroup $S$ of $G$ is a finite extension of a Sylow subgroup of $N$, all of which are finitely generated; so $d(S)$ is finite, and hence $c(S)$ is also finite.\end{proof}

The following lemma is the main part of the proof of the theorem.  We denote by $\lambda_n$ the set of primes at most $n$.

\begin{lem}\label{sylact}Let $G$ be a prosoluble group with $c_*(G) =c$, for some integer $c$.  Let $K$ be the smallest normal subgroup of $G$ such that $G/K$ has exponent dividing $\eb(c)$ and derived length at most $\db(c)$.  Then
\[ K' \leq S[G,S]C_G(S) \]
for any Sylow subgroup $S$ of $G$.\\
 
Moreover any $p$-Sylow subgroup of $K'$ centralises a $\lambda'_n$-Hall subgroup of $G$, for some $n$ depending on $p$ and $G$.\end{lem}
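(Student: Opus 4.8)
The plan is to reduce the statement to a local claim about individual primes and then reassemble the pieces using the profinite Hall theory already in hand. First I would record that, since $c_*(G)=c$ is finite, every Sylow subgroup of $G$ is finitely generated (otherwise some $d_q(G)$, and hence $c_q(G)$, would be infinite), with $c(S_q)\le c$ for each prime $q$. Fix the prime $p$ and let $P$ be a $p$-Sylow subgroup of $K'$. The reduction I would aim for is this: it suffices to produce an integer $n$, depending on $p$ and $G$, such that for every prime $q>n$ the subgroup $P$ centralises some $q$-Sylow subgroup $S_q$ of $G$. Indeed, if this holds then $C_G(P)$ contains a $\{q\}$-Hall subgroup of $G$ for every $q\in\lambda'_n$, and since $G$ is prosoluble the Corollary to the profinite Sylow and Hall theorems assembles these into a $\lambda'_n$-Hall subgroup of $G$ contained in $C_G(P)$, which is exactly a $\lambda'_n$-Hall subgroup centralised by $P$.

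For the local claim at a single large prime $q$, I would combine the first part of the lemma with the Corollary on automorphisms of a finitely generated pro-$p$ group. Fix a $q$-Sylow $S_q$ of $G$; using $K'\unlhd G$, a Frattini argument lets me assume $P\le N_G(S_q)$, so that $P$ genuinely acts on $S_q$. The automorphism Corollary, applied to the prosoluble group of automorphisms of $S_q$ induced by $N_G(S_q)$ and using $c(S_q)\le c$, shows that the subgroup obtained from that automorphism group by the $\eb(c)/\db(c)$ recipe has pro-$q$ derived group. The containment $K'\le S_q[G,S_q]C_G(S_q)$ from the first part then places the image of $K'$ in $\Aut(S_q)$ inside this pro-$q$ derived group, so $K'$ acts on $S_q$ as a pro-$q$ group. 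Consequently the image of the pro-$p$ group $P$ in this pro-$q$ group of automorphisms is trivial, i.e. $P\le C_G(S_q)$, as required.

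For the uniformity of $n$ I would observe that $G/K$ has exponent dividing $\eb(c)$, so every prime dividing $|G/K|$ is at most $\eb(c)$; for $q>\eb(c)$ the prime $q$ does not divide $|G/K|$, whence $S_q\le K$, and this is precisely the range in which the local picture above stabilises. Taking $n:=\max\{p,\eb(c)\}$ then gives a single bound valid for all larger $q$. The main obstacle is the middle step: $K$ is defined globally, via $G^{\eb(c)}$ and $G^{(\db(c))}$, whereas the automorphism Corollary is phrased for the image of a group acting on $S_q$, and $G$ acts on $S_q$ only through $N_G(S_q)$. Bridging this gap — transferring the global $\eb/\db$-description of $K$ to the local action on $S_q$ — is exactly what the first part of the lemma is for, and the delicate point is confirming that the pro-$p$ subgroup $P$ really lands in the kernel of the local action, rather than merely in some $q$-subgroup of it.
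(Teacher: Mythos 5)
Your proposal has two genuine gaps, and the first is fatal to it as a proof of the stated lemma: you never prove the first assertion, $K' \leq S[G,S]C_G(S)$. You invoke it repeatedly as ``the first part of the lemma'', but it is part of what is to be shown, and it is where the paper does the real work: by the Frattini argument $G=[G,S]N_G(S)$, the quotient $G/S[G,S]C_G(S)$ is an image of $L=N_G(S)/SC_G(S)$; since $S$ is a Sylow subgroup of $N_G(S)$, the group $L$ is pro-$p'$ and, by coprime action, acts faithfully on $S/\Phi(S)$; running Corollary \ref{malcor} along an $\Aut(S)$-series of $S/\Phi(S)$ with factors of rank at most $c$ then produces an abelian normal subgroup $M$ of $L$ with $L/M$ of exponent dividing $\eb(c)$ and derived length at most $\db(c)$, so the preimage in $G$ of the image of $M$ contains $K$ and has derived group inside $S[G,S]C_G(S)$. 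None of this appears in your proposal, so even your second part has no valid input. (Your opening reduction --- centralise a $q$-Sylow of $G$ for every $q>n$ and assemble a $\lambda'_n$-Hall subgroup inside $C_G(P)$ via prosolubility --- is correct and is exactly the paper's closing step.)

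Second, your route from the first part to the ``moreover'' fails at both of its key steps. (a) No Frattini argument lets you assume $P\leq N_G(S_q)$: $S_q$ is a Sylow subgroup of $G$, not of a normal subgroup, and a $p$-Sylow need not normalise any $q$-Sylow (in $S_4$, no $2$-Sylow lies in the normaliser of a $3$-Sylow, which has order $6$). The Frattini-plus-Sylow manoeuvre works only once you know $S_q$ lies in a normal pro-$p'$ subgroup, i.e.\ $S_q\leq O_{p'}(G)$, and that holds precisely for $q$ larger than the (finitely many) primes involved in $G/O_{p'}(G)$ --- finiteness that comes from Tate's theorem ($G/O_{p'}(G)$ is virtually pro-$p$ since $d_p(G)$ is finite). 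This, not $\eb(c)$, is where $n$ comes from; your $n=\max\{p,\eb(c)\}$ with the observation $S_q\leq K$ does not substitute, since $K$ is not pro-$p'$ and Sylow theory in $KN_G(S_q)$ gives no control on $P$. (b) The containment $K'\leq S_q[G,S_q]C_G(S_q)$ does not place any ``image of $K'$ in $\Aut(S_q)$'' anywhere: $K'$ does not act on $S_q$, and an element of $N_G(S_q)\cap [G,S_q]$ may act on $S_q$ non-trivially with order coprime to $q$ --- in $A_5$ with $S$ a $5$-Sylow one has $S[G,S]C_G(S)=G$ while $N_G(S)=D_{10}$ inverts $S$ --- so membership in $S_q[G,S_q]C_G(S_q)$ simply does not force a pro-$q$ action, and you give no argument that prosolubility rescues this. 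The paper avoids actions on $S_q$ altogether: for $q>n$ it has $T\leq O_{p'}(G)$, hence $T[G,T]\leq O_{p'}(G)$ and $K'\leq O_{p'}(G)C_G(T)$; since $|O_{p'}(G)C_G(T):C_G(T)|$ is a $p'$-number, a Sylow argument inside that product conjugates $P$ into $C_G(T)$, i.e.\ $P\leq C_G(T^g)$ for some $g$, which is the local claim you wanted. Your final paragraph correctly identifies this bridge as the main obstacle, but it is left uncrossed.
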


\begin{proof}Let $S$ be a $p$-Sylow subgroup of $G$, for some prime $p$.  For the first assertion, it suffices to show that $G/(S[G,S]C_G(S))$ has an abelian normal subgroup $K/S[G,S]C_G(S)$ such that the quotient $G/K$ has exponent dividing $\eb(c)$ and derived length at most $\db(c)$.  But by the Frattini argument as applied to Sylow's theorem, we have
\[ G = [G,S]N_G(S) \]
so that $G/(S[G,S]C_G(S))$ is isomorphic to a quotient of $L = N_G(S)/SC_G(S)$.  Now $N_G(S)/C_G(S)$ acts faithfully on $S$, and so by coprime action $L$ acts faithfully on $S/\Phi(S)$.  By Corollary \ref{malcor}, $L$ therefore has an abelian subgroup $M$ such that $L/M$ has exponent dividing $\eb(c)$ and derived length at most $\db(c)$, and so the same must be true for $G/(S[G,S]C_G(S))$.\\

Now consider the action of $G$ on $O_{p'}(G)$.  First of all, $G/O_{p'}(G)$ is virtually pro-$p$, and hence pro-$\lambda_n$ for some $n$.  For all primes $q > n$, we have some $q$-Sylow subgroup $T$ of $G$ contained in $O_{p'}(G)$.  Applying the previous part, we get
\[ K' \leq T[G,T]C_G(T) \leq O_{p'}(G)C_G(T) \]
from which it follows that a $p$-Sylow subgroup of $K'$ is contained in $C_G(T)$.  So the centraliser of some $p$-Sylow subgroup $R$ of $K'$ contains a $q$-Sylow subgroup, and hence this is true for any $p$-Sylow subgroup by Sylow's theorem.  So $C_G(R)$ contains a $q$-Sylow subgroup of $G$ for all primes $q > n$, and hence contains a $\lambda'_n$-Hall subgroup of $G$, since $C_G(R)$ is prosoluble.\end{proof}

\begin{thmc}Let $G$ be a profinite group such that $c_*(G)$ is finite.  Then $G$ is virtually pronilpotent-by-(Sylow-finite)-by-abelian.\end{thmc}

\begin{proof}Since $c_*(G)$ is finite, $c_2(G)$ is finite and hence $d_2(G)$ is finite.  So $G/O_{2'}(G)$ is virtually pro-$2$, in other words it has a $2$-core $H/O_{2'}(G)$ of finite index.  By the Odd Order Theorem (\cite{Fei}), $O_{2'}(G)$ is prosoluble, so $H$ is also prosoluble.  Lemma \ref{xstar} ensures that $c' = c_*(H)$ is finite.  Now applying Lemma \ref{sylact} to $H$, we obtain a characteristic subgroup $K$ of $H$ such that $H/K$ has exponent dividing $\eb(c')$ and derived length at most $\db(c')$, and any $p$-Sylow subgroup of $K'$ centralises a $\lambda'_n$-Hall subgroup of $H$, for some $n$ depending on $p$ and $H$.  Given a $p$-Sylow subgroup $R$ of $H/K$, we see that $R$ is finitely generated since $d_p(H)$ is finite, and also $R$ has exponent dividing $\eb(c')$ and derived length at most $\db(c')$.  Hence $R$ is finite, of order bounded by a function of $d_p(H)$ and $c'$.  Since $H/K$ has finite exponent, $|H/K|$ involves only finitely many primes.  Hence $H/K$ is finite.  Since $K$ is characteristic in $H$, it is normal in $G$, and we have $G/K$ finite.\\

It remains to show that $K'/F(K')$ is Sylow-finite.  Let $S$ be a $p$-Sylow subgroup of $K'$, so that $S$ centralises a $\lambda'_n$-Hall subgroup $L$ of $H$ for some $n$.  We may choose $n$ to be greater than $p$, so that $S$ is contained in a $\lambda_n$-Hall subgroup $M$ of $H$.  Now $M$ is virtually pronilpotent, so $O_p(M) \cap K'$ has finite index in $S$, and $O_p(M)$ is centralised by $L$ and normalised by $M$, so it is normal in $LM=H$ and hence $O_p(M) \cap K'$ is contained in $F(K')$.  This establishes that all Sylow subgroups of $K'/F(K')$ are finite.  But this means that a $\lambda'_n$-Hall subgroup of $K'/F(K')$ has finite index, for any integer $n$, and so every Sylow subgroup also has a centraliser of finite index.\end{proof}

\paragraph{Remark}We use the Odd Order Theorem here to show that $O_{2'}(G)$ is prosoluble, but for the rest of the proof it suffices to show (or include as a hypothesis) that $G$ is virtually prosoluble, for which it is sufficient to establish that there is some integer $n$ such that all finite $\lambda'_n$-groups are soluble.  It may be possible to prove this last claim for a sufficiently large $n$ in a way that bypasses large parts of the proof of the Odd Order Theorem.

\begin{corn}Let $G$ be a profinite group such that $c_*(G)$ is finite, and let $H$ be a subgroup of $G$.  Then $H$ is Fitting-regular.\end{corn}

\begin{proof}Since $G$ is virtually pronilpotent-by-(Sylow-finite)-by-abelian, the same is true of $H$.  The property of being Fitting-regular is closed under extensions, so the result follows from the fact that pronilpotent, Sylow-finite, abelian and finite groups are all Fitting-regular.\end{proof}

We have established a sufficient condition for Fitting-regularity.  On the other hand, it is easy to imagine non-trivial profinite groups which are Fitting-degenerate if we impose some alternative finiteness conditions on the Sylow structure.  These are best understood in general terms rather than by giving specific constructions.\\

Suppose we have a sequence of finite groups $G_i$ and primes $p_i$, with the following properties:

(i) $G_1$ is cyclic of order $p_1$;

(ii) $G_{i+1} = V_{i+1}:G_i$, where $V_{i+1}$ is an elementary abelian $p_i$-group on which $G_i$ acts faithfully.\\

It is easy to see that, no matter what finite group we have for $G_i$, it is possible to choose a suitable $G_{i+1}$, and moreover we have a free choice of the primes $p_i$.  These $G_i$ also form an inverse system in an obvious way, giving rise to a prosoluble group $G$.\\

Now consider the (generalised) Fitting subgroup of $G$ (note $F^*(G)=F(G)$ in this case).  For each $G_i$, the image of $F(G)$ in $G_i$ is contained in $F(G_i)$.  But it is clear from the construction of $G_i$ that $F(G_i)$ is always a $p_i$-group.  Suppose $F(G)$ is non-trivial; then for some $p$ it contains a non-trivial pro-$p$ element $x$.  This means that, for all sufficiently large $i$, the image of $x$ in $G_i$ is a non-trivial $p$-element of $F(G_i)$.  Hence $p_i=p$ for all sufficiently large $i$.\\

By choosing the $p_i$ appropriately, we can thus ensure $G$ is Fitting-degenerate.  One such choice of $p_i$ would be to make $p_i = p$ for $i$ odd and $p_i=q$ for $i$ even, for some pair of distinct primes $p$ and $q$.  Another choice of $p_i$ would be to make all the $p_i$ distinct, so that the set of $p_i$ could be any infinite set of primes, and every Sylow subgroup of $G$ is finite and elementary abelian.\\

The above arguments show the following:

\begin{propn}(i) There exist non-trivial Fitting-degenerate prosoluble groups, all of whose Sylow subgroups are finite elementary abelian groups.\\
 
(ii) There exist non-trivial Fitting-degenerate prosoluble groups which are countably based pro-$\{p,q\}$ groups, for any distinct primes $p$ and $q$.\end{propn}

\section{Fitting-degeneracy and countably-based images}

In this final section, we give a couple of results to show that the property of Fitting-degeneracy can be investigated it terms of `small' images.\\

We say a finite group $G$ is \emph{primitive} if there exists a maximal subgroup $H$, such that the core of $H$ in $G$ is trivial.  Note that a finite image $G/N$ of a profinite group $G$ is primitive if and only if $N$ is the core of a maximal open subgroup of $G$.

\begin{thmd}Let $G$ be a non-trivial profinite group.  Then $G \in \FD$ if and only if the following holds, for any $x \in G \setminus 1$:
 
$(*)$ There is an open normal subgroup $K$ of $G$, depending on $x$, such that $G/K$ is primitive and $xK$ is not contained in $F^*(G/K)$.\end{thmd}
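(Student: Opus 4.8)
The $(\Leftarrow)$ direction is quick and I would dispose of it first. Suppose $(*)$ holds for every $x\ne 1$ but $F^*(G)\ne 1$, and pick $x\in F^*(G)\setminus 1$. For the open normal subgroup $K$ supplied by $(*)$, the Proposition characterising normal $F^*$-subgroups (applied with $M=F^*(G)$, using (ii)$\Rightarrow$(iii)) gives $F^*(G)K/K\le F^*(G/K)$, whence $xK\in F^*(G/K)$, contradicting $(*)$. So $F^*(G)=1$, i.e. $G\in\FD$.

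For $(\Rightarrow)$ the first step is to descend to finite quotients. Writing $F^*_N$ for the preimage in $G$ of $F^*(G/N)$, the same Proposition yields $F^*(G)=\bigcap_{N\unlhd_o G}F^*_N$: the inclusion $\subseteq$ is (ii)$\Rightarrow$(iii) with $M=F^*(G)$, while for $\supseteq$ the intersection is normal and its image in each $G/N$ lies in $F^*(G/N)$, so it is contained in $F^*(G)$ by (iii)$\Rightarrow$(ii). Since $F^*(G)=1$, any $x\ne 1$ satisfies $xN\notin F^*(G/N)$ for some $N\unlhd_o G$. Replacing $G$ by the finite group $\bar G:=G/N$ and $x$ by $\bar x:=xN$, and using that a primitive quotient of $\bar G$ pulls back to a primitive quotient of $G$ (primitive finite quotients being exactly the cores of maximal open subgroups), it suffices to prove the finite statement: if $\bar x\notin F^*(\bar G)$ then there is $\bar K\unlhd\bar G$ with $\bar G/\bar K$ primitive and $\bar x\bar K\notin F^*(\bar G/\bar K)$.

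I would prove this by induction on $|\bar G|$. If $\bar G$ is primitive, take $\bar K=1$ and use $\bar x\notin F^*(\bar G)$. Otherwise the inductive step goes through as soon as I produce a minimal normal subgroup $V$ with $\bar x V\notin F^*(\bar G/V)$: apply the hypothesis to $(\bar G/V,\bar x V)$ and pull the resulting normal subgroup back to $\bar G$. Thus everything reduces to the detection claim $(\dagger)$: \emph{if $\bar G$ is not primitive and $\bar x V\in F^*(\bar G/V)$ for every minimal normal subgroup $V$, then $\bar x\in F^*(\bar G)$.} Here I would use that every minimal normal subgroup lies in $F^*(\bar G)$ (abelian ones in $F(\bar G)$, non-abelian ones in $E(\bar G)$, by Theorem \ref{opilayer}), and that Theorem B applied to the finite, hence Fitting-regular, group $\bar G$ gives $C_{\bar G}(F^*(\bar G))=Z(F(\bar G))$, so it is enough to show each such $\bar x$ acts on $F^*(\bar G)$ as an inner automorphism.

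The main obstacle is $(\dagger)$, which I would split according to the socle. If $\bar G$ has a \emph{unique} minimal normal subgroup $V$, then every non-trivial normal subgroup contains $V$, so a maximal subgroup avoiding $V$ would be core-free; as $\bar G$ is not primitive no such maximal subgroup exists, i.e. $V\le\Phi(\bar G)$, and since $\Phi(\bar G)$ is nilpotent (Lemma on the Frattini subgroup) $V$ must be abelian. The standard identity $F^*(\bar G/V)=F^*(\bar G)/V$ for $V\le\Phi(\bar G)$ then gives $\bar x\in F^*(\bar G)$ immediately. If instead $\bar G$ has two distinct minimal normal subgroups $V_1,V_2$, then $V_1\cap V_2=1$, and $\bar x$ lies in $D:=F^*_{V_1}\cap F^*_{V_2}$, where each $V_i\le D\unlhd\bar G$ and each $D/V_i$ is a finite $F^*$-group by the Proposition; since $V_1\cap V_2=1$, $D$ embeds as a subdirect (indeed fibre) product in $(D/V_1)\times(D/V_2)$. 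The crux — the step I expect to be most delicate — is to show that such a fibre product of $F^*$-groups over a common quotient is again an $F^*$-group, so that $D\le F^*(\bar G)$ and in particular $\bar x\in F^*(\bar G)$. I would verify this through the characterisation of $F^*$-groups in Corollary \ref{opicor}(i) (nilpotent modulo the layer, perfect modulo the Fitting subgroup), combining the closure of finite nilpotent groups under subdirect products with a short analysis of how the components of the two factors amalgamate in $D$.
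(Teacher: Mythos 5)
Your $(\Leftarrow)$ direction is correct and agrees with the paper's. The $(\Rightarrow)$ direction, however, has a fatal gap: the finite statement you reduce to is \emph{false}. Let $\bar G$ be the non-split extension $2^3 \cdot L_3(2)$ (this exists since $H^2(L_3(2),\bF_2^3)\neq 0$; it is one of the two perfect groups of order $1344$), and let $V$ be the normal subgroup $\bF_2^3$, on which $L_3(2)$ acts faithfully and irreducibly. Non-splitness plus irreducibility force $V\le\Phi(\bar G)$: a maximal subgroup not containing $V$ would be a complement to $V$. Moreover $F^*(\bar G)=V$: the Fitting subgroup is $V$, and $\bar G$ has no components, since a component $L$ would satisfy $LV=\bar G$, hence $L=\bar G$ (Frattini non-generation), making $\bar G$ quasisimple and $V$ central, contradicting faithfulness of the action. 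The only proper non-trivial normal subgroup of $\bar G$ is $V$, so the only primitive quotient of $\bar G$ is $\bar G/V\cong L_3(2)$, which equals its own $F^*$. Thus for any $\bar x\notin V$, no primitive quotient of $\bar G$ detects $\bar x\notin F^*(\bar G)$. The same group refutes the ``standard identity'' you invoke in Case 1: here $F^*(\bar G/V)=L_3(2)$ while $F^*(\bar G)/V=1$. Gasch\"{u}tz's theorem $F(G/V)=F(G)/V$ for $V\le\Phi(G)$ does \emph{not} extend to $F^*$; components need not lift over the Frattini subgroup, which is exactly the well-known fact that the formation of quasinilpotent groups is not saturated. (Your Case 2 is actually fine: finite $F^*$-groups are closed under subdirect products, as quasinilpotent groups form a formation. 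The failure is concentrated in Case 1, and it is the statement $(\dagger)$ itself that fails, not merely your proof of it.)

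The deeper problem is that your very first reduction discards the hypothesis that makes the theorem true. Passing to a single finite quotient $G/N$ with $xN\notin F^*(G/N)$ loses the Fitting-degeneracy of $G$ itself: a Fitting-degenerate profinite group can certainly have $G/N\cong 2^3\cdot L_3(2)$ among its finite images, and then the primitive quotient detecting $x$ must be a deeper quotient $G/K$ with $K<N$, not a quotient of $G/N$. The paper instead uses degeneracy globally: $F(G)=1$ forces $\Phi(G)=1$ (the Frattini subgroup is pronilpotent and normal), a property which does \emph{not} pass to finite quotients; hence every $x\neq 1$ lies outside some maximal open subgroup, whose core $K$ yields a primitive quotient with $xK\neq 1$, and an inverse-limit argument over all such primitive quotients shows that if $xK$ always lay in $F^*(G/K)$ then $x\in F^*(G)=1$. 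Any repair of your argument must similarly keep working inside the profinite group, interleaving the choice of primitive quotients with passage to ever deeper open normal subgroups, rather than freezing one finite image at the outset.
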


\begin{proof}Suppose $G$ is not Fitting-degenerate, and take $x \in F^*(G) \setminus 1$.  Then $xK \in F^*(G/K)$ given any $K$, so $(*)$ is false.\\

Now assume $G$ is Fitting-degenerate.  This means that $G$ has no non-trivial pronilpotent normal subgroups, so in particular $\Phi(G)$ is trivial.  Hence for any element $x$ of $G \setminus 1$, we can find a maximal subgroup $H$ such that $H$ does not contain $x$, and so the core $K$ of $H$ also does not contain $x$.  Since $G/K$ is primitive, this shows that $G$ is the inverse limit of finite primitive groups.  If we had $xK \in F^*(G/K)$ for a given $x$ in every such image, it would imply $x \in F^*(G)$, contradicting the assumption that $G$ is Fitting-degenerate.  This proves $(*)$.\end{proof}

\begin{thme}Let $G$ be a profinite group that is not Fitting-regular.  Then there is a normal subgroup $N$ of $G$ such that $G/N$ is countably based and Fitting-degenerate.\end{thme}

\begin{proof}Without loss of generality, we may replace $G$ by $G/O^\FD(G)$, and so assume $G$ is Fitting-degenerate.  Set $N_1$ to be any proper open normal subgroup of $G$.  We obtain open normal subgroups $N_{i+1}$ for $i \in \mathbb{N}$ inductively as follows:\\

Let $M$ be the preimage of a non-trivial normal subgroup of $G/N_i$.  Suppose for every open normal subgroup $K$ of $G$ contained in $N_i$ that $F^*(G/K)$ covers $M/N_i$.  Then by a standard inverse limit argument, we would obtain a subgroup of $F^*(G)$ covering $M/N_i$, which is impossible as $G$ is Fitting-degenerate.  So there must be an open normal subgroup $K_M \leq N_i$ of $G$ such that $F^*(G/K_M)$ does not cover $M/N_i$.  Set $N_{i+1}$ to be the intersection of all the chosen normal subgroups $K_M$.\\

Now set $N$ to be the intersection of all the $N_i$; by construction, $G/N$ is countably based.  Also by construction, $F^*(G/N_{i+1})$ cannot cover any non-trivial normal subgroup of $G/N_i$, and hence $F^*(G/N_{i+1}) \leq N_i/N_{i+1}$.  As a result $F^*(G/N) \leq N_i/N$ for all $i$, so $G/N$ is Fitting-degenerate.\end{proof}

\section{Acknowledgments}
This paper is based on results obtained by the author while under the supervision of Robert Wilson at Queen Mary, University of London (QMUL).  The author would also like to thank Charles Leedham-Green for introducing the author to the study of profinite groups and for his continuing advice and guidance in this area.  The author acknowledges financial support provided by EPSRC and QMUL for the duration of his doctoral studies.

\end{document}